\newcommand{\lcm}{\mathrm{lcm}}
\newcommand{\Z}{\mathbb Z}
\newcommand{\C}{\mathcal C}
\newcommand{\F}{\mathbb{F}}
\newcommand{\fq}{\mathbb{F}_q}
\newcommand{\I}{\mathbb I}
\newcommand{\ord}{\mathrm{ord}}
\newtheorem{theorem}{Theorem}[section]
\newtheorem{conjecture}[theorem]{Conjecture}
\newtheorem{proposition}[theorem]{Proposition}
\newtheorem{lemma}[theorem]{Lemma}
\newtheorem{corollary}[theorem]{Corollary}
\theoremstyle{definition}
\newtheorem{definition}[theorem]{Definition}
\newtheorem{problem}[theorem]{Problem}
\newtheorem{example}[theorem]{Example}
\theoremstyle{remark}
\newtheorem{remark}[theorem]{Remark}
\author[S.D. Cohen]{Stephen D. Cohen}\thanks{The first author is Emeritus Professor of Number Theory, University of Glasgow}
\address{6 Bracken Road, Portlethen, Aberdeen AB12 4TA, Scotland, UK}
\email{Stephen.Cohen@glasgow.ac.uk}
\author[G. Kapetanakis]{Giorgos Kapetanakis}
\address{Department of Mathematics, University of Thessaly, 3rd km Old National Road Lamia-Athens, 35100 Lamia, Greece}
\email{gnkapet@gmail.com}
\author[L. Reis]{Lucas Reis}
\address{Departamento de Matem\'{a}tica,
Universidade Federal de Minas Gerais,
UFMG,
Belo Horizonte MG (Brazil),
 31270901}
\email{lucasreismat@gmail.com}
\title{The existence of $\F_q$-primitive points on curves using  freeness}
\keywords{finite fields, character sums, elliptic curves}
\date{\today
}
\subjclass[2020]{11T30 (primary), 11A07, 11T23 (secondary)} 
\begin{document}

\begin{abstract}
Let  $\C_Q$  be the   cyclic group of order $Q$, $n$ a divisor of $Q$ and $r$ a divisor of $Q/n$.  We introduce the set of $(r,n)$-free elements of $\C_Q$  and derive a   lower bound for the the number of elements  $\theta \in \F_q$ for which $f(\theta)$ is $(r,n)$-free  and $F(\theta)$ is $(R,N)$-free, where $ f, F \in \F_q[x]$. As an application, we consider the existence of $\F_q$-primitive points   on curves like $y^n=f(x)$ and find, in particular, all the odd prime powers $q$ for which the  elliptic curves  $y^2=x^3 \pm x$  contain an $\F_q$-primitive point.
\end{abstract}
\maketitle

\section{Introduction}
For a prime power $q$, let $\F_q$ be the finite field with $q$ elements. It is well-known that the multiplicative group $\F_q^*=\F_q\setminus \{0\}$ is cyclic and any generator of such a group is a \emph{primitive} element of $\F_q$. Primitive elements are a recurrent object of study in the finite field theory, mainly because of their applications in practical situations such as the discrete logarithm problem. Vinogradov obtained a simple character sum formula for the indicator (characteristic) function of such elements~\cite{mullenpanario13}*{Theorem~6.3.90}. The latter can be subsumed into a  general concept of freeness, which is strongly related to the multiplicative structure of the elements of $\F_q^*$. More precisely, for a divisor $d$ of $q-1$, an element of $\F_q$ is \emph{$d$-free} if is not of the form $\beta^s$ for any divisor $s>1$ of $d$. Evidently,  primitive elements of $\F_q$ are just the $(q-1)$-free elements of $\F_q$.

From a theoretical point of view, many authors have explored the existence and number of primitive elements of finite fields with additional properties. The main tools are Vinogradov's formula and bounds on multiplicative character sums such as Weil's bound. A common theme is the description of  finite fields containing a pair $(\alpha, F(\alpha))$ of primitive elements of $\F_q$, where $F\in \F_q(x)$ is a rational function. The latter is equivalent to looking at $\F_q$-rational points on the $0$-genus curve $\C:\, y=F(x)$ whose coordinates are primitive.   Such a point will be referred to as an  {\em $\F_q$-primitive point}. Relevant articles containing relatively complete results on the  existence of $\F_q$-primitive points on a curve $y=F(x)$  include \cite{cohenoliveiraesilvastrudgian15} (for $F$ a general linear polynomial), \cite{cohenoliveiraesilvasutherlandtrudgian18}, Corollary 2 (i),(ii) (for $F(x) = x \pm 1/x$) and  \cite{bookercohensutherlandtrudgian19} (for $F$ a general quadratic polynomial). Additionally,   \cite {carvalhoguardiieironeumanntizziotti21}  applies to  rational functions  $F=f_1/f_2$ where $f_1$ and $f_2$ are polynomials, with partial numerical  results for $\deg f_1+ \deg f_2 \le 7$. 
A natural extension  would  be to consider the existence of $\F_q$-primitive points on curves of the form  $y^n=F(x) $, where  $n$ is an integer  indivisible by the field characteristic and $F$ is a rational function in  $\F_q(x)$. An important example would be  that of  elliptic curves, $y^2=f(x)$,  where $q$ is an odd prime power and  $f$ is a square-free cubic 
polynomial. (Note that in this last situation, our terminology is to be distinguished from that of a primitive point introduced in  \cite{langtrotter77}.)

In this paper we generalize the notion of freeness, also considering the more general setting of finite cyclic groups. Such a concept not only recovers the former description for primitive elements but also the description of elements in $\F_q^*$ with any prescribed multiplicative order. In particular, we obtain a character sum formula for the indicator function of elements in $\F_q^*$ with prescribed multiplicative order, recovering a result from Carlitz~\cite{carlitz52}.

Next,  we extend the idea of freeness to  the  definition of $(r,n)$-free elements in a finite cyclic group (introduced in Section \ref{Sec:(r,n)-free}).   This  is an  appropriate one for the  discussion of the existence of primitive points on curves of the form $y^n=F(x)$ and more general questions.  In this context,  we then study pairs of polynomial expressions with special restrictions, obtaining a criterion for the existence of such pairs (Corollary \ref{cor:main_cond}). As an application of the latter (and  a ``sieving'' version, Theorem \ref{thm:main_siev}), we obtain both asymptotic and concrete results on the existence of $\F_q$-primitive points of the elliptic curve $\mathcal C:\, y^2=f(x)$.  These are especially effective when studying elliptic curves of the form $\C_a: y^2=x^3-ax, a \in \F_q^*$.  In particular, we shall establish the following theorem.
 \begin{theorem}\label{thm:ell1}
  Let $q$ be an odd prime power.   Then there exist an $\F_q$-primitive point on the elliptic curve $\C_1$ if and only if $q \notin \{ 3, 7, 13, 17, 25, 49,  121\}$.

 Similarly, there exists an $\F_q$-primitive point on the elliptic curve $\C_{-1}$ if and only if $q \notin \{5,9, 17, 41, 49\}$.
 
 \end{theorem}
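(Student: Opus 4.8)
The plan is to deduce Theorem~\ref{thm:ell1} from the general existence criterion of Corollary~\ref{cor:main_cond} together with its sieving refinement Theorem~\ref{thm:main_siev}, applied to the specific curves $\C_{\pm 1}:\,y^2=x^3\mp x$. First I would set up the arithmetic framework: an $\F_q$-primitive point on $\C_a:\,y^2=x^3-ax$ is a pair $(\theta,\eta)\in\F_q^2$ with $\eta\neq0$ such that $\eta$ is a primitive element of $\F_q$ (i.e.\ $(q-1)$-free) and $\eta^2=\theta^3-a\theta=f_a(\theta)$; so the task is to count $\theta\in\F_q$ for which $f_a(\theta)$ is a nonzero square whose square root is primitive. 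This is exactly the ``$(r,n)$-free'' set-up with $n=2$: writing $Q=q-1$, one asks for $\theta$ such that $f_a(\theta)$ is $(q-1,2)$-free in $\C_{q-1}\cong\F_q^*$ (taking the second polynomial $F$ trivial, or absorbing any auxiliary condition such as $\theta\ne0$ into the count). Corollary~\ref{cor:main_cond} then gives a sufficient condition of the shape $q^{1/2} > C(q)\,W(q-1)$, where $W(m)$ denotes the number of squarefree divisors of $m$ and $C(q)$ is an explicit constant depending on $\deg f_a=3$ and on the ramification data of the relevant Kummer cover (here $\sqrt{f_a(x)}$, which over $\overline{\F_q}$ picks up the cubic $x^3-ax$).

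Next I would carry out the asymptotic step: show that the inequality from Corollary~\ref{cor:main_cond} holds for all $q$ beyond some explicit bound $q_0$. Using the crude estimate $W(q-1)\le d(q-1)\ll_\epsilon q^\epsilon$, or more practically the standard tabulated bound $W(m)\le c_t\, m^{1/t}$ for suitable $t$, the condition $q^{1/2}>C(q)W(q-1)$ fails only for $q$ in a finite, explicitly boundable range. I would push $q_0$ down as far as convenient by hand (exploiting that $C(q)$ is essentially a fixed constant times $\sqrt q$-free factors), leaving a manageable finite list of ``bad'' prime powers to test. For those, the plain Corollary is not strong enough, so the sieve of Theorem~\ref{thm:main_siev} enters: by removing from $q-1$ a well-chosen set of small primes (the sieving primes), the effective constant $W$ is replaced by a much smaller quantity $\delta^{-1}W(\text{core})$ at the cost of a $\delta$-factor, and one checks the resulting sharper inequality for each remaining $q$. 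This is the computational heart: for each candidate $q$ one optimizes the choice of sieving primes and verifies the inequality, which should dispatch all but the genuinely small cases.

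Finally, the genuinely small cases — the candidates left after sieving, which should be close to the claimed exceptional sets $\{3,7,13,17,25,49,121\}$ for $\C_1$ and $\{5,9,17,41,49\}$ for $\C_{-1}$ — must be settled by direct computation: for each such $q$ one enumerates $\F_q^*$, finds all primitive elements, and checks whether any $\theta\in\F_q$ satisfies $\theta^3-a\theta=\eta^2$ for a primitive $\eta$; this both confirms the true exceptions and verifies that every other small $q$ (not excluded by the bounds) does contain an $\F_q$-primitive point. I expect the main obstacle to be the middle range of $q$ — those values too large for brute-force enumeration to be comfortable yet too small for the unsieved Corollary — where the success of the argument hinges on a sufficiently clever, case-by-case choice of sieving primes; getting the constant $C(q)$ in Corollary~\ref{cor:main_cond} as sharp as possible for $\deg f=3$, $n=2$ (e.g.\ by carefully accounting for the genus of the relevant superelliptic curve and for the fact that $f_a$ is odd, so $f_a(-\theta)=-f_a(\theta)$, which links the values at $\theta$ and $-\theta$) is what makes this middle range tractable. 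A secondary subtlety is the boundary contributions (the point at infinity on $\C_a$, the zero $\theta=0$, and the $2$-torsion points where $\eta=0$), which must be excluded consistently so that the character-sum count genuinely bounds the number of \emph{affine} $\F_q$-primitive points with $\eta\ne0$.
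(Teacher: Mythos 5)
Your overall strategy (a Weil-type sufficient condition, an asymptotic reduction via $W(m)\le c_t m^{1/t}$, the Cohen--Huczynska sieve for the middle range, and a finite computer search at the end) is the same as the paper's. However, there is a genuine gap in your setup: you impose primitivity only on the $y$-coordinate. An $\F_q$-primitive point on $\C_a$ is, by the paper's definition, a point \emph{both} of whose coordinates are primitive, so one must count $\theta\in\F_q$ such that $\theta$ is primitive \emph{and} $f_a(\theta)$ is $2$-primitive, i.e.\ $((q-1)/2,2)$-free. (Note also that ``$(q-1,2)$-free'' is not a well-formed instance of Definition~\ref{deffree}, since $r$ must divide $Q/n$.) The correct application of the machinery is therefore to the \emph{pair} of polynomials $(f,F)=(x,\,f_a(x))$ with parameters $(r,n)=(q-1,1)$ and $(R,N)=((q-1)/2,2)$, exactly the quantity $N_{x,f_a(x)}(q-1,1,(q-1)/2,2)$ of Theorem~\ref{thm:main_cond} --- not a single condition with ``$F$ trivial''. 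With only the $y$-condition you would be proving a weaker existence statement with a different (smaller) exceptional set, and the ``only if'' direction of Theorem~\ref{thm:ell1} could not be matched against the claimed lists $\{3,7,13,17,25,49,121\}$ and $\{5,9,17,41,49\}$. Moreover, Corollary~\ref{cor:main_cond} as stated does not even apply to a single polynomial condition (it requires $f/F$ non-constant), so the trivial-$F$ variant would need a separate, if easy, estimate.

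Two smaller points. First, the ``ramification data'' you allude to is concretely the number $D+1$ of distinct roots of the product $x\cdot f_a(x)$: since $x\mid f_a(x)$, this product has only $3$ distinct roots, so $D=2$ rather than $3$, and the sufficient condition becomes $q^{1/2}>4\,W(q-1)W((q-1)/2)$ instead of $6\,W(q-1)W((q-1)/2)$; this saving, not the oddness of $f_a$ or genus considerations, is what makes the family $\C_a$ tractable. Second, the worry about excluding $\theta=0$, $\eta=0$ and the point at infinity is already absorbed by the convention $\eta(0)=0$ for extended multiplicative characters, which forces $\I_{r,n}(0)=0$; no separate boundary bookkeeping is needed. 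Finally, be aware that for the two specific curves $\C_{\pm1}$ the last stage is not a sieving triumph but a direct exhaustive search over the finitely many prime powers surviving the sieve (all at most $16763671$), which is what actually produces the two exceptional lists.
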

 
 More generally,  after our theoretical work and calculations, we are enabled to make the following conjecture. 

\begin{conjecture}\label{conj:ell2}
Let $q$ be an odd prime power.

 Suppose $q \notin S:= \{3,5,7,9,13,17,25, 29,31,41,49, 61,73,81, 121,337\}$.  Then, for any $a \in \F_q^*$, there exists an $\F_q$-primitive point on $\C_a$.  
\end{conjecture}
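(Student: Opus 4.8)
The plan is to apply the character-sum criterion of Corollary \ref{cor:main_cond}, together with its sieved refinement Theorem \ref{thm:main_siev}, to the curve $\C_a:\, y^2=x^3-ax$, taking $f(x)=x$ and $F(x)=x^3-ax=x(x^2-a)$. A point $(x,y)\in\C_a(\fq)$ is $\fq$-primitive precisely when $x$ is primitive and $y$ is primitive. Since $y^2=F(x)$ and $q$ is odd, primitivity of $y$ is \emph{not} the same as primitivity of $F(x)$: it asks that $F(x)$ be a square and that a square root of it generate $\fq^*$, which is exactly the order-plus-square-root condition packaged by the $(R,N)$-free notion of Section \ref{Sec:(r,n)-free} with $N=2$. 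I would therefore encode ``$x$ primitive'' as $(r,n)$-freeness of $f(x)=x$ with $(r,n)=(q-1,1)$ and ``$y$ primitive'' as $(R,N)$-freeness of $F(x)$ with $N=2$ and $R$ the maximal admissible divisor, and let $N_a$ denote the resulting number of $\fq$-primitive points on $\C_a$.

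The core estimate comes from expanding both freeness indicators into multiplicative characters (via Vinogradov's formula and the Carlitz order formula recovered earlier) and bounding the inner sums $\sum_{\theta\in\fq}\chi_1(\theta)\,\chi_2(F(\theta))$. Because $F=x(x^2-a)$ is a squarefree cubic for \emph{every} $a\in\fq^*$ (its roots $0,\pm\sqrt a$ are distinct in characteristic $\neq 2$), Weil's bound controls each nontrivial such sum by $O(q^{1/2})$ with an absolute implied constant determined only by $\deg F=3$. Feeding this into Corollary \ref{cor:main_cond} produces a lower bound of the shape
\[
N_a \;\geq\; \delta(q)\,\bigl(q - B(q)\,q^{1/2}\bigr),
\]
where $\delta(q)$ gathers the density factors and $B(q)$ is built from the squarefree-divisor sum $W(q-1)$. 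The decisive point is that $\delta(q)$ and $B(q)$ depend only on $q$ (through $\w(q-1)$ and such divisor sums) and on $\deg F$, and are \emph{uniform in} $a$; this uniformity is what lets a single estimate handle all twists at once.

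It follows that $N_a>0$ for all $a$ as soon as $q^{1/2}>B(q)$, and since $B(q)=q^{o(1)}$ this fails only for $q$ in a bounded range. I would then invoke the sieve of Theorem \ref{thm:main_siev}, sieving on the largest few prime divisors of $q-1$, to replace $B(q)$ by a much smaller sieved quantity and drive the resulting threshold $q_0$ down to a computationally tractable size, exactly as in the treatment of $\C_{\pm 1}$ in Theorem \ref{thm:ell1}. Every $q>q_0$ is automatically outside $S$ and satisfies the inequality for all $a$, so the conjecture holds for all but finitely many $q$.

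The real obstacle is the finitely many $q\leq q_0$ with $q\notin S$, where existence of an $\fq$-primitive point on $\C_a$ must be confirmed for \emph{every} $a\in\fq^*$. Unlike the distinguished cases $a=\pm 1$, there is no reduction to a few representatives: the twist isomorphisms $(x,y)\mapsto(u^2x,u^3y)$ sending $\C_a$ to $\C_{u^4a}$ do not preserve primitivity of the coordinates, so each $a$ must in principle be tested separately. The difficulty is compounded because the primitive density $\phi(q-1)/(q-1)$ can be very small when $\w(q-1)$ is large, which both keeps $q_0$ sizeable and widens the band of failing $q$, while the number of values of $a$ to check grows with $q$. This combinatorial explosion, together with the appearance of a genuinely sporadic exception such as $q=337$, is exactly what blocks a complete proof and is why the statement is offered only as a conjecture: a rigorous resolution would require either a sharper, $a$-uniform character-sum bound that collapses $q_0$, or a large but finite machine verification covering every remaining pair $(q,a)$.
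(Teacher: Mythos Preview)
Your outline matches the paper's approach essentially point for point: encode primitivity of $x$ as $(q-1,1)$-freeness of $f(x)=x$ and primitivity of $y$ as $((q-1)/2,2)$-freeness of $F(x)=x^3-ax$, apply Corollary~\ref{cor:main_cond} (noting that $x\cdot F(x)=x^2(x^2-a)$ has only three distinct roots, so $D=2$ rather than $D=3$ as for a generic cubic), sieve via Theorem~\ref{thm:main_siev}, and attempt direct verification for the surviving $q$. You also correctly identify the obstruction, and the paper agrees: the statement is a \emph{conjecture}, not a theorem, precisely because the exhaustive check over all $a\in\fq^*$ was carried out only for the first $4624$ of the $11041$ residual prime powers, leaving the range $141121\le q\le 16763671$ unverified. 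So there is no ``paper's own proof'' to compare against; your proposal accurately reconstructs both the partial progress and the reason it stops short.
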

We know that Conjecture~\ref{conj:ell2} holds for prime powers $q$ outside the interval [141121, 167763671] (see Section 4). With reference to Conjecture \ref{conj:ell2}, for any $q \in S$,  the precise  number of curves $\C_a$ (values of $a \in \F_q^*$) which do {\em not} contain an $\F_q$-primitive point is tabulated in Table \ref{tab:1} below (at the end of Section 4).

\section{Preparation}
This section provides some background material that is further used. We start fixing some notation. For positive integers $a$ and $b$, we set  $a_{(b)}=\frac{a}{\gcd(a, b)}$. As usual, $\mu$ and $\phi$ stand for the M\"obius and Euler totient function, respectively. Moreover, for a positive integer $A$, we denote its square-free part by $A^*$.
 
 \subsection{Characters}
Recall that, for a finite group $G$, a \emph{character} of $G$ is a group homomorphism $\eta:G\to \mathbb C^{*}$. If $G$ is cyclic of order $n$ with generator $g$, any character of $G$ is uniquely determined by the image of $g$. Moreover, since $g^n=1$, such an image must be an $n$-th complex root of unity. From these observations, we readily obtain the following result.

\begin{lemma}
If $G$ is a cyclic group of order $n$ with generator $g$, the set of characters of $G$ is a multiplicative group of order $n$, generated by the character $\eta: g^k\mapsto e^{\frac{2\pi  ik}{n}}$.
\end{lemma}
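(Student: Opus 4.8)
The plan is to prove this essentially by unwinding definitions together with the structure theory of cyclic groups. The claim is that for a cyclic group $G$ of order $n$ with generator $g$, the character group $\widehat G$ is cyclic of order $n$, generated by $\eta: g^k \mapsto e^{2\pi i k/n}$. First I would verify that $\eta$ is well-defined and is a genuine character: since any element of $G$ can be written as $g^k$ with $k$ defined modulo $n$, and $e^{2\pi i k/n}$ depends only on $k \bmod n$, the map is well-defined; multiplicativity $\eta(g^k g^\ell) = \eta(g^{k+\ell}) = e^{2\pi i(k+\ell)/n} = \eta(g^k)\eta(g^\ell)$ is immediate, and the image lies in $\mathbb C^*$, so $\eta \in \widehat G$.

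Next I would show $\eta$ has order exactly $n$ in $\widehat G$. For a positive integer $m$, the character $\eta^m$ sends $g \mapsto e^{2\pi i m/n}$, which equals $1$ precisely when $n \mid m$; since a character of $G$ is trivial iff it sends the generator $g$ to $1$, we get $\eta^m = \mathbf 1$ iff $n \mid m$, so $\ord(\eta) = n$. In particular the cyclic subgroup $\langle \eta\rangle \le \widehat G$ already has $n$ elements.

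It then remains to see $\widehat G$ has no more than $n$ elements, so that $\widehat G = \langle\eta\rangle$ and in particular $\widehat G$ is cyclic of order $n$. As noted in the surrounding text, any character $\chi$ of $G$ is uniquely determined by $\chi(g)$, and $\chi(g)^n = \chi(g^n) = \chi(1) = 1$ forces $\chi(g)$ to be one of the $n$ complex $n$-th roots of unity. Hence the assignment $\chi \mapsto \chi(g)$ is an injective homomorphism $\widehat G \hookrightarrow \mu_n$ (the group of $n$-th roots of unity), so $|\widehat G| \le n$. Combined with $|\langle\eta\rangle| = n$, this yields $\widehat G = \langle\eta\rangle$, a cyclic group of order $n$ generated by $\eta$.

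There is really no substantive obstacle here; the only mild point to be careful about is well-definedness of $\eta$ and the precise criterion ``a character is trivial iff it kills the generator,'' both of which follow at once from the fact that $g$ generates $G$. One could also phrase the surjectivity half more abstractly by noting that $\chi \mapsto \chi(g)$ is visibly surjective onto $\mu_n$ since $\eta$ hits a primitive $n$-th root of unity and $\mu_n$ is generated by any such; but the injectivity argument above is cleanest and suffices. I would therefore present the proof as: (i) $\eta$ is a character; (ii) $\widehat G \to \mu_n$, $\chi \mapsto \chi(g)$, is an isomorphism; (iii) $\eta$ maps to a generator of $\mu_n$; conclude.
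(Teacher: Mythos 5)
Your proof is correct and follows exactly the route the paper itself sketches (the lemma is stated there without a formal proof, preceded only by the observation that a character is determined by its value on $g$, which must be an $n$-th root of unity); you simply fill in the routine details of well-definedness, the injection $\chi \mapsto \chi(g)$ into the $n$-th roots of unity, and the fact that $\eta$ has order $n$. Nothing further is needed.
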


The map $g\mapsto 1\in \mathbb C$ is always a character of $G$, commonly called the \emph{trivial} character of $G$. Given a character $\eta$ of $G$, the least positive integer $k$ such that $\eta(h)^k=1$ for every $h\in G$ is the \emph{order} of $\eta$, denoted by $\ord(\eta)$. 

\begin{definition}
If $\F_q$ is a finite field, a {\em multiplicative character} of $\F_q$ is a character $\eta$ of the multiplicative cyclic group $G=\F_q^*$. We extend $\eta$ to $0\in \F_q$ by setting $\eta(0)=0$.
\end{definition}

The following well-known theorem provides a bound on character sums over finite fields with polynomial arguments.

\begin{theorem}[\cite{LN}*{Theorem~5.41}] \label{Gauss1}
Let $\eta$ be a multiplicative character of $\F_{q}$ of order $r>1$ and $F\in \F_{q}[x]$ be a polynomial of positive degree such that $F$ is not of the form $ag(x)^r$ for some $g\in \F_{q}[x]$ with degree at least $1$ and $a\in \F_q$. Suppose that $z$ is the number of distinct roots of $F$ in its splitting field over $\F_{q}$. Then the following holds: 
$$\left|\sum_{c\in \F_{q}}\eta(F(c))\right|\le (z-1)\sqrt{q}.$$
\end{theorem}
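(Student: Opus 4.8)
The plan is to read $S:=\sum_{c\in\fq}\eta(F(c))$ off the zeta function of the superelliptic curve $y^{r}=F(x)$ and then invoke Weil's Riemann Hypothesis for curves over finite fields, the constant $z-1$ falling out of a count of tame ramification. First I would dispose of normalisations: scaling $F$ by a nonzero constant only multiplies $S$ by a root of unity, so I may take $F$ monic; write $F=\prod_{i=1}^{z}(x-\alpha_{i})^{e_{i}}$ with the $\alpha_{i}$ the distinct roots of $F$ in $\overline{\fq}$, and note that $\ord(\eta)=r$ forces $r\mid q-1$. To $F$ I attach the multiplicative character $\chi$ of the rational function field $\fq(x)$ given on a monic irreducible $P$ coprime to $F$ by $\chi(P)=\eta\!\left(\prod_{P(\gamma)=0}F(\gamma)\right)$. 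The hypothesis that $F$ is not of the form $a\,g(x)^{r}$ says exactly that $\chi$ does not factor through the degree map (equivalently, that $F$ does not become a perfect $r$-th power over $\overline{\fq}$), which is precisely the condition under which the $L$-function
\[
L(T,\chi)=\prod_{P\nmid F}\bigl(1-\chi(P)\,T^{\deg P}\bigr)^{-1}
\]
is a \emph{polynomial}, say $L(T,\chi)=\prod_{k=1}^{d}(1-\w_{k}T)$. Comparing the logarithmic derivative of the Euler product with that of the factored form yields, for every $m\ge1$,
\[
\sum_{c\in\F_{q^{m}}}\eta\!\bigl(\mathrm{N}_{\F_{q^{m}}/\fq}(F(c))\bigr)=-\sum_{k=1}^{d}\w_{k}^{\,m},
\]
and the case $m=1$ reads $S=-\sum_{k=1}^{d}\w_{k}$. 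It then suffices to bound $d$ and the moduli $|\w_{k}|$.

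For the moduli: by Weil's theorem — the Riemann Hypothesis for curves over $\fq$, applied to (a geometric component of) the smooth projective model of $y^{r}=F(x)$, of whose zeta-function numerator $L(T,\chi)$ is a factor — one has $|\w_{k}|=\sqrt q$ for every $k$. For the degree: since $r\mid q-1$ is coprime to the characteristic, $\chi$ is tamely ramified, and its conductor $\mathfrak{f}_{\chi}$ is therefore a reduced divisor supported on $\infty$ together with some of the places dividing $\rad F$; hence $\deg\mathfrak{f}_{\chi}\le\deg(\rad F)+1=z+1$, and the functional equation for $L(T,\chi)$ gives $d=\deg\mathfrak{f}_{\chi}-2\le z-1$. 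Combining the two bounds, $|S|=\bigl|\sum_{k=1}^{d}\w_{k}\bigr|\le d\,\sqrt q\le(z-1)\sqrt q$, which is the assertion.

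The substantial ingredient, and the one genuine obstacle, is Weil's Riemann Hypothesis itself — the square-root estimate $|\w_{k}|\le\sqrt q$; the remainder is formal manipulation of Euler products and tame-ramification bookkeeping. A self-contained route would avoid zeta functions altogether via Stepanov's polynomial method: for each $m$ one builds an auxiliary polynomial of tightly controlled degree vanishing to high prescribed order at the $\F_{q^{m}}$-rational points of $y^{r}=F(x)$, extracts the bound $\bigl|\#\{\text{such points}\}-q^{m}\bigr|\ll q^{m/2}$ with an explicit constant, and then uses the rationality and functional equation of the associated generating series to promote these to the clean statement. Either way, the cancellation of square-root type is the heart of the matter, while the appearance of $z-1$ (rather than $\deg F-1$) is exactly the dividend of the ramification analysis.
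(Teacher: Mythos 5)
This statement is imported verbatim from Lidl--Niederreiter (their Theorem~5.41) and the paper gives no proof of it, so the only meaningful comparison is with the cited source. Your sketch is a correct reduction: attaching to $F$ a character $\chi$ of $\F_q(x)$, noting that the hypothesis ``$F\neq a g(x)^r$'' makes $\chi$ nontrivial and ramified so that $L(T,\chi)$ is a polynomial, bounding its degree by $\deg\mathfrak{f}_\chi-2\le z-1$ using tameness (legitimate since $\ord(\eta)=r$ forces $r\mid q-1$, prime to the characteristic), and then invoking $|\omega_k|=\sqrt q$. The conductor bookkeeping that yields $z-1$ rather than $\deg F-1$ is right. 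This is, however, a genuinely different route from the one in the cited source: Lidl--Niederreiter first handle the case where $F$ splits into distinct linear factors over $\F_q$, expressing the sum as a Jacobi-type sum $\sum_c\chi_1(c-a_1)\cdots\chi_z(c-a_z)$ whose $\omega$'s are built from Gauss sums of modulus $\sqrt q$, and then pass to general $F$ over its splitting field via the Davenport--Hasse relation; that argument is self-contained at the level of Gauss-sum identities, whereas yours outsources the square-root cancellation to Weil's Riemann Hypothesis for the Kummer cover $y^r=F(x)$ (as you acknowledge). Both architectures are standard and legitimate; just be aware that, as written, the polynomiality of $L(T,\chi)$, its identification with a factor of the zeta function of the cover, and the RH itself are asserted rather than proved, so your text is a proof plan rather than a complete proof --- which is acceptable here, since the paper itself treats the statement as a black box.
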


 \subsection{On $n$-primitive elements}
If $n\mid q-1$, then an element of $\fq$ of order $(q-1)/n$ is called \emph{$n$-primitive} and recently these elements have started attracting attention \cite{cohenkapetanakis19b,cohenkapetanakis19,cohenkapetanakis20,kapetanakisreis18}. This is mainly due to their theoretical interest and partially because, unlike primitive elements, we have efficient algorithms that locate such elements \cite{gao99,martinezreis16,popovych13}. A challenging aspect of their study is their characterization.
According to Carlitz~\cite{carlitz52}, we have the following result.

\begin{lemma}\label{lem:car}
If $N$ is a divisor of $q-1$, the characteristic function for the set of elements in $\F_{q}$ with multiplicative order $N$ can be expressed as
\begin{equation}\label{eq:Carlitz}\mathcal O_{N}(\omega)=\frac{N}{q-1}\sum_{d|N}\frac{\mu(d)}{d}\sum_{\ord(\eta)|\frac{d(q-1)}{N}}\eta(\omega).\end{equation}
\end{lemma}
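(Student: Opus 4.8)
The plan is to derive the formula in Lemma~\ref{lem:car} directly from the familiar orthogonality relations for characters of the cyclic group $\F_q^*$, combined with M\"obius inversion. First I would recall that an element $\w \in \F_q^*$ has multiplicative order exactly $N$ if and only if $\w$ lies in the unique subgroup $H_N$ of order $N$ (equivalently $\w^N = 1$) \emph{and} $\w$ does not lie in any proper subgroup of $H_N$. The indicator function of the first condition, membership in $H_N$, is handled by orthogonality: for $\w \in \F_q^*$,
\begin{equation*}
\frac{1}{(q-1)/N}\sum_{\ord(\eta)\mid (q-1)/N}\eta(\w)=\begin{cases}1,&\w^N=1,\\0,&\text{otherwise},\end{cases}
\end{equation*}
since the characters $\eta$ with $\ord(\eta)\mid (q-1)/N$ are exactly the characters trivial on $H_N$, and they sum to $N$ on $H_N$ and to $0$ off it.

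Next I would peel off the elements of too-small order. Write $\theta_N(\w)$ for the indicator of ``$\w$ has order exactly $N$'' and $g_N(\w)$ for the indicator of ``$\w$ has order dividing $N$'', so that $g_N=\sum_{e\mid N}\theta_e$. By M\"obius inversion over the divisor lattice, $\theta_N=\sum_{d\mid N}\mu(d)\,g_{N/d}$. Substituting the character-sum expression for $g_{N/d}=\frac{N/d}{q-1}\sum_{\ord(\eta)\mid \frac{d(q-1)}{N}}\eta(\w)$ (obtained from the orthogonality identity above with $N$ replaced by $N/d$, using that $(q-1)/(N/d)=d(q-1)/N$) gives
\begin{equation*}
\mathcal O_N(\w)=\sum_{d\mid N}\mu(d)\cdot\frac{N/d}{q-1}\sum_{\ord(\eta)\mid \frac{d(q-1)}{N}}\eta(\w)=\frac{N}{q-1}\sum_{d\mid N}\frac{\mu(d)}{d}\sum_{\ord(\eta)\mid \frac{d(q-1)}{N}}\eta(\w),
\end{equation*}
which is exactly \eqref{eq:Carlitz}.

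The only genuinely delicate point—and the step I would treat most carefully—is the bookkeeping on the character side: verifying that the characters $\eta$ of $\F_q^*$ with $\ord(\eta)\mid m$ are precisely those trivial on the subgroup of order $(q-1)/m$, and that there are exactly $m$ of them summing to $(q-1)/m$ on that subgroup and $0$ elsewhere. This is a standard consequence of the structure of the character group of a finite cyclic group (the dual of $\F_q^*$ is itself cyclic of order $q-1$), but one must make sure the divisibility conditions $N\mid q-1$, $d\mid N$ are used so that $d(q-1)/N$ is an integer dividing $q-1$ at every stage. Once the orthogonality identity and the M\"obius inversion are in place, no further estimates are needed; the result is an exact identity. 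An alternative, essentially equivalent route would be to start from Vinogradov's characteristic function for primitive elements and replace $q-1$ by $N$ throughout, but I find the orthogonality-plus-inversion derivation cleaner to present.
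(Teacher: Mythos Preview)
Your argument is correct. One cosmetic slip: in the sentence after the first display, the characters trivial on $H_N$ number $(q-1)/N$, so on $H_N$ their sum is $(q-1)/N$, not $N$; the displayed identity itself is fine.

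As for comparison: the paper does not actually prove Lemma~\ref{lem:car}. It attributes the formula to Carlitz and, a few lines later, remarks that the alternative form~\eqref{eq:Carlitz2} is a special case of the general Proposition~\ref{prop:charI} (via Remark~\ref{rem:main}(ii), which identifies elements of order $N$ with the $(Q/n,n)$-free elements for $n=(q-1)/N$). Your route---orthogonality (your display is exactly the paper's Lemma~\ref{lemma:orth}) followed by M\"obius inversion on the divisor lattice of $N$---is the classical direct derivation. It is also the same mechanism that drives the paper's proof of Proposition~\ref{prop:charI}, where the orthogonality step appears as Lemma~\ref{lemma:orth} and the M\"obius inversion is packaged as inclusion--exclusion over the prime divisors of $r$ in Eq.~\eqref{eq:I1}. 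So your proof is not a different approach so much as the specialisation of the paper's general argument to the case $r=Q/n$, carried out without the $(r,n)$-free language.
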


By reordering of the terms in Eq.~\eqref{eq:Carlitz}, a standard  number-theoretic argument based on Möbius inversion yields the following alternative formula:
\begin{equation}\label{eq:Carlitz2}\mathcal O_N(\omega)=\frac{\phi(N)}{N}\sum_{t|q-1}\frac{\mu(t_{(n)})}{\phi(t_{(n)})}\sum_{\ord(\eta)=t}\eta(w),\quad n=\frac{q-1}{N}.\end{equation}


Note that the above expression of the characteristic function for $n$-primitive elements is in fact a generalization of Vinogradov's expression for the characteristic function for primitive elements \cite{mullenpanario13}*{Theorem~6.3.90}. Further, note that a similar variation of Vinogradov's formula, that characterizes $n$-primitive elements is proven in \cite[Lemma~2.1]{cohen03}. We omit the proof of the latter since a more general result is proved in Section~\ref{Sec:(r,n)-free}; see Remark~\ref{rem:main} and Proposition~\ref{prop:charI} for more details.



We end this section with an identity related to the sum appearing in Lemma~\ref{lem:car} that is further used.

\begin{lemma}\label{lem:arithfun}
For positive integers $r, n$, we have that
$$T(r, n):=\sum_{t\mid r} \frac{|\mu(t_{(n)})|}{\phi(t_{(n)})}\cdot\phi(t)=\gcd(r, n)\cdot W\left(\gcd(r,r_{(n)})\right),$$
where $W(a)$ denotes the number of square-free divisors of $a$. 
\end{lemma}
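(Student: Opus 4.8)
The plan is to exploit multiplicativity. Both sides of the claimed identity are multiplicative as functions of $r$ when $n$ is fixed: the left side $T(r,n)=\sum_{t\mid r}\frac{|\mu(t_{(n)})|}{\phi(t_{(n)})}\phi(t)$ is a divisor sum of a product of multiplicative-in-$r$ quantities, hence multiplicative in $r$; and the right side is $\gcd(r,n)\cdot W(\gcd(r,r_{(n)}))$, where $\gcd(r,n)$, $r_{(n)}=r/\gcd(r,n)$, and $W$ (the number of squarefree divisors, i.e. $2^{\omega(\cdot)}$) are all multiplicative in $r$. So it suffices to check the identity when $r=p^a$ is a prime power, with $n$ having $p$-adic valuation $v_p(n)=b\ge 0$.

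First I would reduce to the local computation. Write $g=\gcd(p^a,n)=p^{\min(a,b)}$ and $(p^a)_{(n)}=p^{a-\min(a,b)}=p^{\max(a-b,0)}$. Then $\gcd(p^a,(p^a)_{(n)})=p^{\min(a,\max(a-b,0))}$, and the right-hand side becomes $p^{\min(a,b)}\cdot W(p^{\min(a,\max(a-b,0))})$, where $W(1)=1$ and $W(p^c)=2$ for $c\ge 1$. For the left-hand side, I would split the sum over $t=p^i$, $0\le i\le a$, according to whether $i\le b$ (so $t_{(n)}=1$, contributing $|\mu(1)|/\phi(1)\cdot\phi(p^i)=\phi(p^i)$) or $i>b$ (so $t_{(n)}=p^{i-b}$, and $|\mu(p^{i-b})|/\phi(p^{i-b})$ is $1$ if $i-b=1$ and $0$ if $i-b\ge 2$; so only $i=b+1$ survives, contributing $\phi(p^{b+1})/\phi(p)=\phi(p^{b+1})/(p-1)$). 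Summing the first group telescopes: $\sum_{0\le i\le\min(a,b)}\phi(p^i)=p^{\min(a,b)}$.

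Then I would finish by a short case analysis on the size of $a$ relative to $b$. If $a\le b$: the second group is empty, so $T=p^a=p^{\min(a,b)}$, and the right side is $p^a\cdot W(1)=p^a$, matching. If $a=b+1$: first group gives $p^b$, second group gives $\phi(p^{b+1})/(p-1)=p^b$, so $T=2p^b=p^{\min(a,b)}\cdot 2$, and the right side is $p^b\cdot W(p^{\min(b+1,1)})=p^b\cdot W(p)=2p^b$, matching. If $a\ge b+2$: first group gives $p^b$, second group gives $p^b$ (only $i=b+1$ contributes), so again $T=2p^b$, while the right side is $p^b\cdot W(p^{\min(a,a-b)})=p^b\cdot W(p^{a-b})=2p^b$ since $a-b\ge 2\ge 1$, matching. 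This exhausts all cases, completing the proof.

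The only mild obstacle is bookkeeping: one must be careful that multiplicativity genuinely holds in the variable $r$ (it does, because $t\mapsto t_{(n)}$ is multiplicative whenever $t$ ranges over divisors of a number coprime structure — more precisely $(t_1t_2)_{(n)}=(t_1)_{(n)}(t_2)_{(n)}$ when $\gcd(t_1,t_2)=1$), and that the two local expressions for $\gcd(r,r_{(n)})$ are correctly identified with $\max(a-b,0)$ capped at $a$. Everything else is routine prime-power arithmetic.
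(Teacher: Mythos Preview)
Your proof is correct and follows essentially the same approach as the paper: reduce to prime powers via multiplicativity in $r$, then carry out the local computation by splitting the divisor sum according to whether $t_{(n)}=1$ or not. The only cosmetic difference is that you distinguish three cases ($a\le b$, $a=b+1$, $a\ge b+2$) where the paper uses two ($a\le b$ and $a>b$), but the arithmetic is identical.
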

\begin{proof}
Observe that $f_n(r):=\frac{|\mu(r_{(n)})|}{\phi(r_{(n)})}\cdot\phi(r)$ and $g_n(r):=\gcd(r, n)\cdot W\left(\gcd(r,r_{(n)})\right)$ are multiplicative functions on $r$. In particular, the same holds for $T(r, n)$ and so it suffices to prove the equality $T(r, n)=g_n(r)$ in the case where $r$ is a prime power. Write $r=s^a$ with $s$ prime and $a\ge 1$, and write $n=s^b\cdot n_0$, where $b\ge 0$ and $\gcd(n_0, s)=1$. We split the proof into two  cases according as $a >b$ or $a\leq b$.

\begin{enumerate}[(i)]
\item If $a >b$, then
\begin{eqnarray*}
T(r, n)&=&\sum_{i=0}^b\frac{|\mu(1)|}{\phi(1)}\phi(s^i)+\sum_{i=b+1}^a\frac{|\mu(s^{i-b})|}{\phi(s^{i-b})}\phi(s^i) \\
&=& s^b+ \frac{\phi(s^{b+1})}{\phi(s)}=2s^b .
\end{eqnarray*}
Moreover, since  $\gcd (r,n)= s^b$ and $\gcd (r,r_{(n)})= s^{a-b}$ then $g_n(r)=2s^b=T(r,n)$.
\item If $a \leq b$, we have that 
$$T(r, n)=\sum_{i=0}^{a}\frac{|\mu(1)|}{\phi(1)}\cdot \phi(s^i)=s^a=r.$$
Moreover, in this case, $\gcd(r, n)=r$ and $r_{(n)}=1$, implying $g_n(r)=r$. 
\qedhere
\end{enumerate}
\end{proof}

\section{Introducing \( (r,n)\)-free elements}\label{Sec:(r,n)-free}
Motivated by the characterization of $n$-primitive elements in Eq.~\eqref{eq:Carlitz2}, we will generalize the well-known notion of $r$-free elements, considering also the more general setting of cyclic groups.

\begin{definition}\label{deffree}
Let $\C_{Q}$ be a multiplicative cyclic group of order $Q$. For a divisor $n$ of $Q$ and a divisor $r$ of $Q/n$, an element $h\in \C_Q$ is \emph{$(r, n)$-free} if the following hold:
\begin{enumerate}[(i)]
 \item $\ord(h)|\frac{Q}{n}$, i.e., $h$ is in the subgroup $\C_{Q/n}$; 
 \item $h$ is $r$-free in $\C_{Q/n}$, i.e., if $h=g^s$ with $g\in \C_{Q/n}$ and $s|r$, then $s=1$. 
\end{enumerate}
\end{definition}

In the following remark we present some straightforward facts on $(r, n)$-free elements.
  \begin{remark}\label{rem:main}
For a divisor $n$ of $Q$ and a divisor $r$ of $Q/n$, the following hold:
\begin{enumerate}[(i)] 
 \item  $(r, 1)$-free elements in $\C_Q$ are just the usual $r$-free elements;
  \item the $(Q/n,n)$-free elements in $\C_Q$ are exactly the elements of order $Q/n$.
 \end{enumerate}
  \end{remark}

   The following is a generalization of \cite[Proposition~5.2]{huczynskamullenpanariothomson13} and its proof is a mere adaptation of the original proof in our setting. We add it here, with the intention of making the relation between $(r,n)$-freeness and the multiplicative order of an element clear.
  \begin{lemma} \label{lem:rnfree}
  Let $n$ be a divisor of $Q$ and $r$ a divisor of $Q/n$. Then an element $h\in \C_Q$ is $(r, n)$-free if and only if $h=g^{n}$ for some $g\in \C_Q$ but $h$ is not of the form $g_0^{np}$ with $g_0\in \C_Q$, for every prime divisor $p$ of $r$. In particular, $h\in \C_Q$ is $(r, n)$-free if and only if  
$\gcd\left(rn, \frac{Q}{\ord(h)}\right)=n$.  
  \end{lemma}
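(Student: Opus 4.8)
The plan is to prove the two assertions in turn: the first by unwinding Definition~\ref{deffree} through the subgroup structure of $\C_Q$, the second by a short $\gcd$ computation that converts the power conditions into divisibility conditions.

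\emph{First assertion.} Since $n\mid Q$, the set of $n$-th powers $\{g^{n}:g\in\C_Q\}$ is precisely the unique subgroup $\C_{Q/n}$ of $\C_Q$ of order $Q/n$; hence condition~(i) of Definition~\ref{deffree}, namely $\ord(h)\mid Q/n$, is equivalent to $h=g^{n}$ for some $g\in\C_Q$. Assuming $h\in\C_{Q/n}$, for a prime $p\mid r$ the $p$-th powers of the cyclic group $\C_{Q/n}$ are exactly the elements $(g^{n})^{p}=g^{np}$ with $g\in\C_Q$, so ``$h$ is not a $p$-th power in $\C_{Q/n}$'' means ``$h\neq g_0^{np}$ for every $g_0\in\C_Q$''. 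Finally I would check that condition~(ii) of Definition~\ref{deffree}, i.e.\ $r$-freeness of $h$ in $\C_{Q/n}$, is equivalent to $h$ being a $p$-th power of $\C_{Q/n}$ for no prime $p\mid r$: one direction is immediate since every prime divisor of $r$ is a divisor of $r$ exceeding $1$, and conversely if $h=g^{s}$ in $\C_{Q/n}$ with $1<s\mid r$, choosing a prime $p\mid s$ yields $h=(g^{s/p})^{p}$, a contradiction. Combining these three facts gives the first assertion.

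\emph{Second assertion.} Recall that an element $x\in\C_Q$ is an $a$-th power if and only if $\ord(x)\mid Q/\gcd(a,Q)$, equivalently $\gcd(a,Q)\mid Q/\ord(x)$. Put $m=Q/\ord(h)$. Since $n\mid Q$ we have $\gcd(n,Q)=n$, so $h=g^{n}$ is solvable iff $n\mid m$; and since $p\mid r\mid Q/n$ we have $np\mid Q$, so $\gcd(np,Q)=np$ and $h=g_0^{np}$ is solvable iff $np\mid m$. By the first assertion, $h$ is $(r,n)$-free iff $n\mid m$ and $np\nmid m$ for every prime $p\mid r$. It remains to see that this is equivalent to $\gcd(rn,m)=n$. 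Indeed, if $\gcd(rn,m)=n$ then $n\mid m$, and for a prime $p\mid r$ the relation $np\mid m$ would give $np\mid\gcd(rn,m)=n$ (as $np\mid rn$), which is impossible; conversely, if $n\mid m$ and $np\nmid m$ for all primes $p\mid r$, set $e=\gcd(rn,m)$, so $n\mid e\mid rn$ and hence $e/n\mid r$, and if $e>n$ then any prime $p\mid e/n$ satisfies $p\mid r$ and $np\mid e\mid m$, a contradiction; thus $e=n$.

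The argument is elementary throughout; the only points needing a little care are the reduction of $r$-freeness to the avoidance of $p$-th powers for prime divisors $p\mid r$, and the divisibility bookkeeping in the final step, neither of which poses a genuine obstacle. This is essentially the adaptation of \cite[Proposition~5.2]{huczynskamullenpanariothomson13} referred to above.
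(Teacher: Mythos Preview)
Your proof is correct and follows essentially the same approach as the paper's: both unwind Definition~\ref{deffree} via the identification $\{g^{n}:g\in\C_Q\}=\C_{Q/n}$ for the first assertion, and both reduce the second assertion to a short divisibility computation. The only cosmetic difference is that the paper writes $h=g^{t}$ for a suitable generator $g$ and divisor $t\mid Q$, then reads off $\gcd(rn,Q/\ord(h))=\gcd(rn,t)=n\cdot\gcd(r,t/n)$ directly, whereas you route through the $a$-th-power criterion together with the first assertion; the two arguments are elementary and equivalent.
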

  \begin{proof}
  Since the set $\{g^n\,|\, g\in \C_Q\}$ describes the elements in $\C_Q$ whose order divides $Q/n$, the first statement follows directly by the definition of $(r, n)$-free elements. For the second statement, pick $h$ an arbitrary element of $\C_Q$. One can easily verify that there exists a generator $g$ of $\C_Q$ and a divisor $t$ of $Q$ such that $h=g^t$. In this case, we have that $h$ is $(r, n)$-free if and only if $t=n\cdot s$, where $s$ divides $Q/n$ and $\gcd(s, r)=1$. We observe that the order of $h$ equals $\frac{Q}{t}=\frac{Q}{ns}$ and so 
  $$\gcd\left(rn, \frac{Q}{\ord(h)}\right)=n\cdot \gcd(r, s),$$
 from where the result follows.
  \end{proof}
 
  The following is an obvious consequence of Lemma~\ref{lem:rnfree}.
  \begin{lemma} \label{lem:rn}
    Let $n$ be a divisor of $Q$ and $r$ a divisor of $Q/n$. 
    An element of $\C_Q$ is $(r,n)$-free if and only if it is $(r^*,n)$-free.
    \end{lemma}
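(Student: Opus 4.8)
The statement to prove is Lemma~\ref{lem:rn}: an element of $\C_Q$ is $(r,n)$-free if and only if it is $(r^*,n)$-free, where $r^*$ is the square-free part of $r$.

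The plan is to deduce this directly from the characterization in Lemma~\ref{lem:rnfree}, namely that $h \in \C_Q$ is $(r,n)$-free if and only if $\gcd(rn, Q/\ord(h)) = n$. The key observation is that whether this equality holds depends only on which primes divide $r$, not on their multiplicities. First I would note that $r$ and $r^*$ have exactly the same set of prime divisors. Since $n \mid Q/n$ is not assumed — rather $r \mid Q/n$ and we need $r^* \mid Q/n$ as well, which is immediate since $r^* \mid r$ — both $(r,n)$-freeness and $(r^*,n)$-freeness are well-defined notions.

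The main step is then the elementary number-theoretic fact: for any integer $m$ (here $m = Q/\ord(h)$) and any positive integers $r, n$, one has $\gcd(rn, m) = n$ if and only if $\gcd(r^*n, m) = n$. To see this, suppose $\gcd(rn,m) = n$. Then certainly $\gcd(r^*n, m) \mid \gcd(rn,m) = n$ since $r^* \mid r$, and also $n \mid \gcd(r^*n,m)$ trivially (as $n \mid r^*n$ and... wait, we need $n \mid m$); in fact $\gcd(rn,m)=n$ forces $n \mid m$, hence $n \mid \gcd(r^*n,m)$, giving equality. Conversely, if $\gcd(r^*n,m) = n$, I would argue that no prime $p \mid r$ can contribute extra: any such $p$ divides $r^*$, so if $p^k \| m$ with the relevant power exceeding what $n$ contributes, it would already violate $\gcd(r^*n,m)=n$ at the prime $p$, because the $p$-adic valuation of $\gcd(rn,m)$ and of $\gcd(r^*n,m)$ differ only if $v_p(m) > \min(v_p(r^*n), \text{something})$ — more carefully, for each prime $p$, $v_p(\gcd(rn,m)) = \min(v_p(r) + v_p(n), v_p(m))$ and $v_p(\gcd(r^*n,m)) = \min(v_p(r^*) + v_p(n), v_p(m))$; if $v_p(m) \le v_p(n)$ these agree (both equal $v_p(m)$), and if $v_p(m) > v_p(n)$ then the condition $\gcd(r^*n,m)=n$ forces $v_p(r^*) + v_p(n) \le v_p(m)$ to fail, i.e. we'd need $v_p(r^*)=0$, hence $v_p(r)=0$ too, so again the valuations agree. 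This comparison prime-by-prime is the heart of the argument.

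I do not anticipate any real obstacle here — the lemma is flagged in the excerpt as "an obvious consequence of Lemma~\ref{lem:rnfree}," and indeed the whole content is the valuation bookkeeping above. The only mild care needed is to handle the primes $p$ dividing $n$ but not $r$ (where the two gcds automatically agree since $r$ and $r^*$ have the same $p$-valuation, namely $0$) separately from primes dividing $r$, and to make sure the well-definedness clause ($r^* \mid Q/n$) is mentioned. A clean way to present it, avoiding valuations entirely, is: by Lemma~\ref{lem:rnfree}, $(r,n)$-freeness of $h$ is equivalent to $h = g^n$ for some $g$ together with $h \ne g_0^{np}$ for every prime $p \mid r$ and every $g_0 \in \C_Q$; since the set of prime divisors of $r$ equals the set of prime divisors of $r^*$, this last family of conditions is literally the same family of conditions, so the two notions coincide. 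I would favour this phrasing as the proof proper, relegating the gcd reformulation to a remark if desired.

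\begin{proof}
Since $r^*\mid r$, the integer $r^*$ is a divisor of $Q/n$ as well, so $(r^*,n)$-freeness is a meaningful notion for elements of $\C_Q$. By Lemma~\ref{lem:rnfree}, an element $h\in\C_Q$ is $(r,n)$-free if and only if $h=g^{n}$ for some $g\in\C_Q$ and, for every prime divisor $p$ of $r$, there is no $g_0\in\C_Q$ with $h=g_0^{np}$. Applying the same lemma with $r^*$ in place of $r$, the element $h$ is $(r^*,n)$-free if and only if $h=g^{n}$ for some $g\in\C_Q$ and, for every prime divisor $p$ of $r^*$, there is no $g_0\in\C_Q$ with $h=g_0^{np}$. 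But $r$ and its square-free part $r^*$ have exactly the same set of prime divisors, so these two characterizations impose identical conditions on $h$. Hence $h$ is $(r,n)$-free if and only if it is $(r^*,n)$-free.
\end{proof}
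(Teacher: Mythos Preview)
Your proof is correct and follows exactly the approach the paper intends: the paper simply remarks that the lemma is ``an obvious consequence of Lemma~\ref{lem:rnfree}'' without spelling out any details, and your argument is precisely the natural unpacking of that remark. There is nothing to add.
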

    
    It follows from Lemma \ref{lem:rn} that, wherever it is convenient, we may assume that $r$ is square-free.

Our next aim (see Proposition~\ref{prop:charI}) is to prove that
  \[ \I_{r,n}(h) := \frac{\phi(r)}{rn} \sum_{t\mid rn} \frac{\mu(t_{(n)})}{\phi(t_{(n)})} \sum_{\ord(\eta)=t} \eta(h) , \ h\in\C_Q.  \]
is a character-sum expression of the characteristic function for $(r,n)$-free elements of $\C_Q$.
   Note that this expression of the characteristic function is in fact a generalization of Vinogradov's expression of the characteristic function for $r$-free elements. In order to proceed with the proof, we will need the following lemma.
  \begin{lemma} \label{lemma:orth}
  Let $t\mid Q$ and $h\in \C_Q$. Then
  \[ S_0:=\frac{1}{t}\sum_{\ord(\eta) \mid t} \eta(h)=1, \]
  if $h=g^t$ for some $g\in \C_Q$. Otherwise, $S_0=0$.
  \end{lemma}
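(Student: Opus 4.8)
The plan is to prove the statement by a direct orthogonality computation. Write $Q=t\cdot m$, so that the subgroup $\{g^t : g\in\C_Q\}$ is exactly the unique subgroup $H$ of $\C_Q$ of order $m=Q/t$. The characters $\eta$ of $\C_Q$ with $\ord(\eta)\mid t$ are precisely those that are trivial on $H$; indeed, if $\w$ is a fixed generator of $\C_Q$ and $\eta$ sends $\w\mapsto \zeta$ for a $Q$-th root of unity $\zeta$, then $\ord(\eta)\mid t$ iff $\zeta^t=1$ iff $\eta$ kills $\w^t$, and $H=\langle \w^t\rangle$. Thus the inner sum ranges over the $t$ characters of the quotient group $\C_Q/H$, pulled back to $\C_Q$.

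With this identification in hand, $S_0$ becomes $\frac{1}{t}\sum_{\psi\in\widehat{\C_Q/H}}\psi(hH)$, where $hH$ denotes the coset of $h$ in $\C_Q/H$. Now I would invoke the standard orthogonality relation for characters of the finite abelian (in fact cyclic) group $\C_Q/H$, which has order $t$: the sum of all character values at a fixed group element equals the order of the group if that element is the identity, and $0$ otherwise. The identity element of $\C_Q/H$ is the coset $H$ itself, and $hH=H$ precisely when $h\in H$, i.e.\ precisely when $h=g^t$ for some $g\in\C_Q$. This yields $S_0=\frac{1}{t}\cdot t=1$ in that case and $S_0=\frac{1}{t}\cdot 0=0$ otherwise, as claimed.

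There is essentially no obstacle here; the only point requiring a line of care is the bijection between $\{\eta \in \widehat{\C_Q} : \ord(\eta)\mid t\}$ and $\widehat{\C_Q/H}$, together with checking that under this bijection $\eta(h)$ matches $\psi(hH)$ (which is immediate, since such $\eta$ factors through $\C_Q/H$). One could alternatively avoid quotients entirely and argue with the explicit generator: writing $h=\w^j$ and $\eta_k\colon \w\mapsto e^{2\pi i k/Q}$ with $t\mid k'$ reindexing, the sum is a finite geometric sum $\sum_{\ell=0}^{t-1} e^{2\pi i \ell j/t}$, which equals $t$ if $t\mid j$ and $0$ otherwise; and $t\mid j$ is equivalent to $h\in\langle\w^t\rangle$, i.e.\ to $h$ being a $t$-th power. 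Either presentation is short; I would favor the quotient-group phrasing for brevity.
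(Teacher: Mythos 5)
Your proof is correct, and it is the same argument the paper has in mind: the paper's proof of this lemma is simply the one-line remark ``Immediate from the orthogonality relations'' with a citation to Lidl--Niederreiter, and your quotient-group computation (identifying the characters of order dividing $t$ with the character group of $\C_Q/H$ for $H$ the subgroup of $t$-th powers, then applying orthogonality for that group of order $t$) is exactly the standard derivation being invoked. You have merely written out the details that the paper delegates to the reference.
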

  \begin{proof}
  Immediate from the orthogonality relations; see Section~5.1 of~\cite{LN}.
  \end{proof}

  \begin{proposition} \label{prop:charI}
  Let $n$ be a divisor of $Q$ and $r$ a divisor of $Q/n$.  If $h\in\C_Q$, then \[ \I_{r,n}(h) = \begin{cases} 1,& \text{if $h$ is $(r,n)$-free}, \\ 0, &\text{otherwise}. \end{cases} \]
  \end{proposition}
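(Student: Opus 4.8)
The plan is to compute $\I_{r,n}(h)$ directly by interchanging the order of summation and then reducing everything to the case already handled by Carlitz's formula together with the auxiliary identity in Lemma~\ref{lem:arithfun}. By Lemma~\ref{lem:rn} we may and do assume $r$ is square-free, so that $\mu(t_{(n)})$ never vanishes on the divisors we care about; this simplifies the bookkeeping. Write $h = g_0^t$ with $g_0$ a fixed generator of $\C_Q$ and $t \mid Q$, so that $\ord(h) = Q/t$. The strategy is to show $\I_{r,n}(h)$ equals the characteristic function of the condition $\gcd(rn, t) = n$, which by Lemma~\ref{lem:rnfree} is exactly $(r,n)$-freeness.

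The key computation proceeds as follows. First I would rewrite the inner double sum $\sum_{d \mid rn}\frac{\mu(d_{(n)})}{\phi(d_{(n)})}\sum_{\ord(\eta)=d}\eta(h)$ by grouping characters according to the divisor $e = \gcd(d, Q/\ord(h)\text{-type data})$; more precisely, I would pass from the stratification by exact order $\ord(\eta)=d$ to the stratification by $\ord(\eta) \mid d$ using Möbius inversion in reverse, i.e.\ write $\sum_{\ord(\eta)=d}\eta(h) = \sum_{e \mid d}\mu(d/e)\sum_{\ord(\eta)\mid e}\eta(h)$, and then apply Lemma~\ref{lemma:orth}, which tells us $\sum_{\ord(\eta)\mid e}\eta(h) = e$ precisely when $e \mid t$ and is $0$ otherwise. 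This collapses the character sum entirely into an arithmetic sum over divisors $d$ of $rn$ with a constraint relating $d$ to $t$. The resulting expression should be, up to the prefactor $\phi(r)/(rn)$, a sum of the shape $\sum_{d \mid \gcd(rn,\,\cdot)} \frac{\mu(d_{(n)})}{\phi(d_{(n)})}\,(\text{something involving }\phi)$, at which point Lemma~\ref{lem:arithfun} (the identity $T(r,n) = \gcd(r,n)\cdot W(\gcd(r,r_{(n)}))$) is exactly the tool needed to evaluate it in closed form.

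I expect the main obstacle to be the careful tracking of the interaction between the "$n$-part" and the "$r$-part" of the divisor $d$ of $rn$, together with the divisibility constraint $e \mid t$ coming from Lemma~\ref{lemma:orth}. Concretely, one must split $d \mid rn$ compatibly with the factorization of $rn$ and verify that the constraint $e \mid t$ forces $n \mid t$ (equivalently, that $\I_{r,n}(h)$ vanishes unless $\ord(h) \mid Q/n$) — this handles condition~(i) of Definition~\ref{deffree} — and then, on the locus where $n \mid t$, show that the remaining sum reduces to Vinogradov's $r$-free characteristic function evaluated inside $\C_{Q/n}$, giving condition~(ii). The prefactor $\phi(r)/(rn)$ is calibrated precisely so that the normalization works out: the $n$ in the denominator absorbs the factor $e$ from Lemma~\ref{lemma:orth} combined with $[\C_Q : \C_{Q/n}] = n$, and the $\phi(r)/r$ is the usual Vinogradov normalization. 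A clean way to organize this is to first prove the Proposition in the special case $r = Q/n$ — where $(r,n)$-freeness is just "$\ord(h) = Q/n$" by Remark~\ref{rem:main}(ii) — recovering Eq.~\eqref{eq:Carlitz2}, and then deduce the general case by a divisor-sum comparison; alternatively, one can mimic verbatim the proof of \cite[Proposition~5.2]{huczynskamullenpanariothomson13} with $Q/n$ in place of $q-1$, which is the route the phrasing of the excerpt suggests.
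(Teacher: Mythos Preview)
Your proposal is not a proof but a plan, and it runs in the opposite direction from the paper: the paper starts from the \emph{characteristic function} side, writing $I_{r,n}(h)=I_n(h)\prod_{i=1}^\kappa(1-I_{np_i}(h))=\sum_{m\mid r}\mu(m)I_{nm}(h)$ via inclusion--exclusion, then applies Lemma~\ref{lemma:orth} to turn each $I_{nm}$ into a character sum, and finally performs a change of order of summation $\sum_{m\mid r}\sum_{t\mid mn}=\sum_{t\mid rn}\sum_{m\mid r/t_{(n)}}$ together with $\sum_{d\mid a}\mu(d)/d=\phi(a)/a$ to arrive at $\I_{r,n}$. You instead propose to start from $\I_{r,n}(h)$ and compute it directly. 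That reverse direction is perfectly legitimate, but the specific tool you single out is wrong.

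The gap is your reliance on Lemma~\ref{lem:arithfun}. That lemma evaluates $T(r,n)=\sum_{t\mid r}\frac{|\mu(t_{(n)})|}{\phi(t_{(n)})}\phi(t)$, a sum with \emph{absolute values} of M\"obius; it is used later in the paper (proof of Theorem~\ref{thm:main_cond}) to \emph{bound} an error term, not to evaluate anything exactly. After you M\"obius-invert and apply Lemma~\ref{lemma:orth} as you describe, the inner sum becomes $\sum_{\ord(\eta)=d}\eta(h)=\sum_{e\mid\gcd(d,t)}\mu(d/e)\,e$, i.e.\ a Ramanujan sum $c_d(t)$, and the remaining outer sum over $d\mid rn$ involves signed $\mu(d_{(n)})$ interacting with $c_d(t)$. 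Lemma~\ref{lem:arithfun} says nothing about that combination. The computation can still be pushed through (e.g.\ using H\"older's evaluation $c_d(t)=\mu(d/\gcd(d,t))\,\phi(d)/\phi(d/\gcd(d,t))$ and then splitting prime-by-prime, since everything is multiplicative), but that is a genuinely different calculation from what you sketched, and you have not indicated how it goes. Your final paragraph's fallback---reduce to the $r$-free indicator inside $\C_{Q/n}$ after first disposing of condition~(i)---is closer to a workable argument, and in fact essentially the paper's inclusion--exclusion route read backwards; but as written the proposal does not carry it out.
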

  \begin{proof}
   Let $p_1,\ldots ,p_\kappa$ be the distinct prime divisors of $r$. From Lemma~\ref{lem:rnfree}, we obtain that $h\in\C_Q$ is $(r,n)$-free if and only if $h=g^n$ for some $g\in\C_Q$, but $h$ is not of the form $g_0^{np_i}$ with $g_0\in\C_Q$, for every $1\leq i\leq \kappa$. Further, if $I_k$ stands for the characteristic function of the set $\{ g^k \ : \ g\in\C_Q\}$, we have that $I_{np_i}I_n = I_{np_i}$. If $I_{r,n}$ is the characteristic function for $(r,n)$-free elements of $\C_Q$, it follows that, for every $h\in\C_Q$
 \begin{align}
 I_{r,n}(h) & = I_n(h) \prod_{i=1}^\kappa (1-I_{np_i}(h)) = \sum_{m\mid r} \mu(m) I_{nm}(h) \nonumber \\ & = \frac 1n \sum_{m\mid r} \frac{\mu(m)}{m} \sum_{\ord(\eta)\mid mn} \eta(h) = \frac 1n \sum_{m\mid r} \sum_{t\mid mn}\frac{\mu(m)}{m} \mathfrak{X}_t(h), \label{eq:I1}
 \end{align}
 where $\mathfrak{X}_t := \sum_{\ord(\eta)=t} \eta(h)$ and in the second-to-last equality we use Lemma~\ref{lemma:orth}.
 In order to obtain a more convenient expression of the above, let $n=p_1^{n_1}\cdots p_k^{n_k}$ and $r=p_1^{r_1}\cdots p_k^{r_k}$ be the prime factorization of $n$ and $r$, where $p_1,\ldots ,p_k$ are distinct primes and $n_i,r_i\geq 0$. Further, observe, that for any arithmetic functions $f$ and $g$, we have that
 \begin{multline*}
 \sum_{m\mid r} \sum_{t\mid mn} f(m)g(t) = \sum_{\substack{1\leq i\leq k \\ 0\leq m_i\leq r_i}} \ \sum_{\substack{1\leq i\leq k \\ 0\leq t_i\leq n_i+m_i}} f(p_1^{m_1} \cdots p_k^{m_k}) g (p_1^{t_1} \cdots p_k^{t_k}) \\
  \sum_{\substack{1\leq i\leq k \\ 0\leq t_i\leq n_i+ r_i}} \ \sum_{\substack{1\leq i\leq k \\ \max(0,t_i-n_i) \leq m_i \leq r_i}} f(p_1^{m_1} \cdots p_k^{m_k}) g (p_1^{t_1} \cdots p_k^{t_k}) = \sum_{t\mid rn} \sum_{m\mid \frac{r}{t_{(n)}}} f(t_{(n)}m) g(t) .
 \end{multline*}
 We use the latter in Eq.~\eqref{eq:I1} and obtain
 \[ I_{r,n}(h)  = \frac 1n \sum_{t\mid rn} \mathfrak{X}_t \sum_{m\mid (r/t_{(n)})} \frac{\mu(t_{(n)}m)}{t_{(n)}m} . \]
 %
 Note that the presence of the M\"obius function of the inner sum of the above, implies that only the square-free divisors $m$ of $r/t_{(n)}$ that are relatively prime with $t_{(n)}$ contribute to sum, i.e., we may rewrite the above as
 \[ I_{r,n}(h) = \frac 1n \sum_{t\mid rn} \mathfrak{X}_t \sum_{m\mid r_{t,n}} \frac{\mu(t_{(n)}m)}{t_{(n)}m} , \]
 where $r_{t,n}$ is the product of the the prime factors of $r/t_{(n)}$ that do not divide $t_{(n)}$. In particular, notice that $r_{t,n}t_{(n)}$ has exactly the same prime factors with $r$. Now, since $\mu(x)/x$ is multiplicative, we may rewrite the latter displayed equation as follows:
 \[ I_{r,n}(h) = \frac 1n \sum_{t\mid rn} \mathfrak{X}_t \frac{\mu(t_{(n)})}{t_{(n)}} \sum_{m\mid r_{t,n}} \frac{\mu(m)}{m} = \frac 1n \sum_{t\mid rn} \mathfrak{X}_t \frac{\mu(t_{(n)})}{t_{(n)}} \frac{\phi(r_{t,n})}{r_{t,n}} , \]
 where we used the well-known identity $\sum_{d\mid a}\frac{\mu(d)}{d} = \frac{\phi(a)}{a}$. Then we rewrite the above as follows
 \[ I_{r,n}(h) = \frac 1n \sum_{t\mid rn} \mathfrak{X}_t \frac{\mu(t_{(n)})}{\phi(t_{(n)})} \frac{\phi(r_{t,n}t_{(n)})}{r_{t,n}t_{(n)}} = \frac 1n \sum_{t\mid rn} \mathfrak{X}_t \frac{\mu(t_{(n)})}{\phi(t_{(n)})} \frac{\phi(r)}{r} . \]
 The result follows after replacing $\mathfrak{X}_t$ and accordingly rearranging the order in the above expression.
  \end{proof}
  
 \begin{remark} 
 If $\C_{Q}=\F_{q}^*$, recall that we extended the multiplicative characters $\eta$ of $\F_q^*$ to $0\in \F_q$ with $\eta(0)=0$. Within this extension the equality $\I_{r, n}(0)=0$ holds, as expected.    
\end{remark}

\section{On $(r, n)$-freeness through polynomial values}
For polynomials $f, F\in \F_q[x]$, we study the number of pairs $(f(y), F(y))$ such that $f(y)$ is $(r, n)$-free and $F(y)$ is $(R, N)$-free with $y\in \F_q$. It is only interesting to explore the case where $q-1$ has proper divisors, so we may assume that $q\ge 5$. Of course, this number of pairs can be zero if $f$ and $F$ have certain multiplicative dependence with respect to the numbers $rn$ and $RN$. The following example gives an instance of the latter.


\begin{example}\label{ex:if}
Let $q$ be an odd prime power,  $n=N=1$ and $r=R=2$. Suppose that $f, F\in \F_q[x]$ are non constant polynomials such that $f\cdot F$ is of the form $\lambda g^2$ with $\lambda$ a nonsquare of $\F_q$. In particular, there is no $y\in \F_q$ such that $f(y)$ and $F(y)$ are $(2, 1)$-free since this would imply that $f(y)\cdot F(y)$ is a nonzero square in $\F_q$.
\end{example}

We can avoid pathological situations like the one in Example~\ref{ex:if} by imposing the following mild condition: $f, F\in \F_q[x]$ are non constant square-free polynomials such that $f/F$ is not a constant. The next theorem shows that this condition asymptotically guarantees the existence of polynomials values $(f(y), F(y))$ with prescribed freeness.

\begin{theorem}\label{thm:main_cond}
Fix $q\ge 5$ a prime power, let $n, N$ be divisors of $q-1$ and let $r$ and $R$ be divisors of $\frac{q-1}{n}$ and $\frac{q-1}{N}$, respectively. Let $f, F\in \F_q[x]$ be non constant square-free polynomials such that the ratio $f/F$ is not a constant and let $D+1\ge 2$ be the number of distinct roots of $f\cdot F$ over its splitting field. Then the number $N_{f, F}=N_{f, F}(r, n, R, N)$ of  elements $\theta\in \F_{q}$ such that $f(\theta)$ is $(r,n)$-free and $F(\theta)$ is $(R, N)$-free  satisfies
$$N_{f, F}=\frac{\phi(r)\phi(R)}{rnRN}\left(q+H(r, n, R, N)\right),$$
with $|H(r, n, R, N)|\le DnNW(r)W(R)q^{1/2}$.
\end{theorem}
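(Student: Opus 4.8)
The plan is to express the counting function $N_{f,F}$ via the indicator functions $\I_{r,n}$ and $\I_{R,N}$ established in Proposition~\ref{prop:charI}, expand the resulting double character sum, isolate the main term, and bound the remaining terms using the Weil-type bound of Theorem~\ref{Gauss1}. Concretely, write
\[ N_{f,F} = \sum_{\theta\in\F_q} \I_{r,n}(f(\theta))\,\I_{R,N}(F(\theta)) = \frac{\phi(r)\phi(R)}{rnRN}\sum_{t\mid rn}\sum_{T\mid RN}\frac{\mu(t_{(n)})\mu(T_{(N)})}{\phi(t_{(n)})\phi(T_{(N)})}\sum_{\substack{\ord(\eta)=t\\ \ord(\chi)=T}} S(\eta,\chi), \]
where $S(\eta,\chi)=\sum_{\theta\in\F_q}\eta(f(\theta))\chi(F(\theta))$. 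The main term is the contribution of the pair $(\eta,\chi)=(\eta_0,\chi_0)$ of trivial characters, which corresponds to $t=T=1$ and gives $\sum_{\theta\in\F_q}\eta_0(f(\theta))\chi_0(F(\theta))$; since $f,F$ are square-free and $f/F$ is nonconstant, $f\cdot F$ has $D+1$ distinct roots, so this sum equals $q - (\text{number of }\theta\text{ with }f(\theta)F(\theta)=0) = q+O(D)$, and in fact we can fold the $O(D)$ discrepancy into $H$. Everything else is the error.

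For the error term, the key point is that whenever $(\eta,\chi)\neq(\eta_0,\chi_0)$, the product $\eta(f(x))\chi(F(x))$ is a nontrivial multiplicative character applied to a polynomial (or rational function) that, thanks to the hypothesis that $f$ and $F$ are square-free with $f/F$ nonconstant, is \emph{not} of the form $a\,g(x)^{\ord}$; hence Theorem~\ref{Gauss1} (extended multiplicatively to handle the two factors, i.e.\ applied to the combined character on the curve) yields $|S(\eta,\chi)|\le D\sqrt q$, the bound $D+1$ being the number of distinct roots of $f\cdot F$. Summing these bounds over all admissible $(t,T)$ and all characters of those orders, and using $\sum_{\ord(\eta)=t}1=\phi(t)$, the total error is at most
\[ \frac{\phi(r)\phi(R)}{rnRN}\cdot D\sqrt q \cdot \sum_{t\mid rn}\frac{|\mu(t_{(n)})|}{\phi(t_{(n)})}\phi(t)\sum_{T\mid RN}\frac{|\mu(T_{(N)})|}{\phi(T_{(N)})}\phi(T) = \frac{\phi(r)\phi(R)}{rnRN}\cdot D\sqrt q\cdot T(rn,n)\,T(RN,N), \]
and Lemma~\ref{lem:arithfun} evaluates $T(rn,n)=\gcd(rn,n)\cdot W(\gcd(rn,(rn)_{(n)}))=n\,W(r)$ (using that $(rn)_{(n)}=r$ when $r\mid (q-1)/n$, after reducing to $r$ square-free via Lemma~\ref{lem:rn}), and similarly $T(RN,N)=N\,W(R)$. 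This produces exactly the claimed bound $DnNW(r)W(R)q^{1/2}$ on $H$.

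The main obstacle I anticipate is the careful treatment of the character sum $S(\eta,\chi)$ for \emph{mixed} nontrivial pairs, i.e.\ when both $\eta$ and $\chi$ are nontrivial: one must verify that $\eta(f(x))\chi(F(x))$ genuinely falls under the hypotheses of Theorem~\ref{Gauss1}, which requires checking that the rational function $f(x)^{a}F(x)^{b}$ (for appropriate exponents $a,b$ determined by $\eta,\chi$) is not a perfect power of the relevant order times a constant — this is where square-freeness of $f$ and $F$ and the nonconstancy of $f/F$ are essential, and where one should be slightly careful about whether a factor appearing in both $f$ and $F$ could conspire to create a perfect power. A secondary technical point is the bookkeeping needed to merge the boundary discrepancy (the $O(D)$ coming from roots of $f\cdot F$, and the fact that $\I_{r,n}(0)=0$) cleanly into $H$ without worsening the stated constant; this is routine but needs to be done with the sign conventions consistent. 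Once these are handled, the arithmetic-function identity from Lemma~\ref{lem:arithfun} does the rest automatically.
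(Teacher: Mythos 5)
Your proposal is correct and follows essentially the same route as the paper's proof: expand $N_{f,F}$ via the indicator functions of Proposition~\ref{prop:charI}, reduce each mixed pair $\eta(f)\chi(F)$ to a single character of order $\lcm(t,T)$ applied to $f^{cL/t}F^{CL/T}$ so that Theorem~\ref{Gauss1} gives $|S(\eta,\chi)|\le D\sqrt q$ for $(t,T)\neq(1,1)$, and evaluate the character count via $T(rn,n)T(RN,N)=nNW(r)W(R)$ from Lemma~\ref{lem:arithfun}. The bookkeeping point you flag is handled in the paper exactly as you suggest, by absorbing the $\epsilon\le D+1$ vanishing-argument discrepancy of the trivial pair using $D+1-Dq^{1/2}<0$ for $q\ge 5$.
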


\begin{proof}
By definition, we have that $N_{f, F}=\sum_{w\in \F_q}\I_{r, n}(f(w))\cdot \I_{R, N}(F(w))$. From Proposition~\ref{prop:charI}, for $\delta=\frac{\phi(r)\phi(R)}{rnRN}$ we have that 
\begin{align*}\frac{N_{f,F}}{\delta} & =\sum_{w\in \F_q}\left(\sum_{t\mid nr} \frac{\mu(t_{(n)})}{\phi(t_{(n)})} \sum_{\ord(\eta)=t} \eta(f(w))\right)\cdot \left(\sum_{T\mid RN} \frac{\mu(T_{(N)})}{\phi(T_{(N)})} \sum_{\ord(\chi)=T}\chi(F(w))\right)
\\ {} & = \sum_{t|rn, \; T|RN}\frac{\mu(t_{(n)})\cdot \mu(T_{(N)})}{\phi(t_{(n)})\cdot \phi(T_{(N)})}\sum_{\ord(\eta)=t\atop \ord(\chi)=T}G_{f, F}(\eta, \chi),\end{align*}
where $G_{f, F}(\eta, \chi)=\sum_{w\in \F_{q}}\eta(f(w))\cdot \chi(F(w))$. Fix $t|rn$ and $T|RN$ and let $\eta, \chi$ be multiplicative characters of $\F_q$ with orders $t$ and $T$, respectively. Then $$\eta(w)\cdot \chi(f(w))=\tilde{\eta}\left(f(w)^{cL/t}\cdot F(w)^{CL/T}\right),$$ for some multiplicative character $\tilde{\eta}$ of $\F_q$ order $L=\lcm(t, T)$ and some integers $1\le c\le t$ and $1\le C\le T$ with $\gcd(c, t)=\gcd(C, T)=1$. Since $f, F$ are square-free and the ratio $f/F$ is not a constant, we easily verify that the polynomial $\mathcal F(x)=f(x)^{cL/t}\cdot F(x)^{CL/T}$ is of the form $\kappa\cdot g(x)^{L}$ if and only if $t=T=1$. Therefore, from Theorem~\ref{Gauss1}, we have that $|G_{f, F}(\eta, \chi)|\le Dq^{1/2}$ whenever $(t, T)\ne (1, 1)$. For $t=T=1$, we observe that $\eta$ and $\chi$ are just the trivial multiplicative character over $\F_q$ and so $G_{f, F}(\eta, \chi)=q-\epsilon$, where $\epsilon$ is the number of roots of $f\cdot F$ over $\F_q$. Since $\epsilon\le D+1$, applying estimates we obtain that
$$\left|\frac{N_{f,F}}{\delta}- q\right|\le D+1+Dq^{1/2}\cdot M,$$
where 
$$M=\sum_{t|rn,\;  T|RN\atop (t, T)\ne (1, 1)}\frac{|\mu(t_{(n)})\cdot \mu(T_{(N)})|}{\phi(t_{(n)})\cdot \phi(T_{(N)})}\sum_{\ord(\eta)=t\atop \ord(\chi)=T}1=T(rn, n)\cdot T(RN, N)-1,$$
and $T(a, b)$ is as in Lemma~\ref{lem:arithfun}. According to Lemma~\ref{lem:arithfun}, we have the equality $T(ab, b)=b\cdot W(a)$ and so 
$$\left|\frac{N_{f,F}}{\delta}-q\right|\le D+1+Dq^{1/2}(nNW(r)W(R)-1)< DnNW(r)W(R)q^{1/2},$$
where in the last inequality we used the fact that $D+1-Dq^{1/2}< 0$ if $D\ge 1$ and $q\ge 5$. The proof is complete.
\end{proof}
We immediately obtain the following corollary:
\begin{corollary} \label{cor:main_cond}
Let $q, r,R,n,N,f,F$ and $D$ be as in Theorem~\ref{thm:main_cond}. If
\[ q^{1/2} > DnNW(r)W(R) , \]
then $N_{f,F}(r, n, R, N)>0$.
\end{corollary}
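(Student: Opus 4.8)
The plan is to derive Corollary~\ref{cor:main_cond} as an immediate consequence of Theorem~\ref{thm:main_cond}, whose full strength has just been established. The idea is that Theorem~\ref{thm:main_cond} already gives an exact formula
\[ N_{f,F}(r,n,R,N) = \frac{\phi(r)\phi(R)}{rnRN}\bigl(q + H(r,n,R,N)\bigr), \]
together with the bound $|H(r,n,R,N)| \le DnNW(r)W(R)q^{1/2}$. Since $q \ge 5$, the leading coefficient $\frac{\phi(r)\phi(R)}{rnRN}$ is strictly positive (each of $r,R,n,N$ divides $q-1$ and hence is a positive integer, and $\phi$ of a positive integer is positive), so the sign of $N_{f,F}$ is governed entirely by the sign of $q + H(r,n,R,N)$.

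First I would observe that, by the triangle inequality,
\[ q + H(r,n,R,N) \ge q - |H(r,n,R,N)| \ge q - DnNW(r)W(R)q^{1/2} = q^{1/2}\bigl(q^{1/2} - DnNW(r)W(R)\bigr). \]
Then I would invoke the hypothesis of the corollary, namely $q^{1/2} > DnNW(r)W(R)$, to conclude that the bracketed quantity is strictly positive; since $q^{1/2} > 0$ as well, we get $q + H(r,n,R,N) > 0$. Multiplying by the positive constant $\frac{\phi(r)\phi(R)}{rnRN}$ yields $N_{f,F}(r,n,R,N) > 0$, which is exactly the assertion. One should note that $N_{f,F}$ is a count of field elements, hence a non-negative integer, but the argument above in fact shows it is strictly positive directly from the formula.

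Honestly, there is no real obstacle here: the corollary is a one-line deduction, and essentially all the work was done in proving Theorem~\ref{thm:main_cond}. The only point requiring a moment's care is confirming that the multiplicative constant $\frac{\phi(r)\phi(R)}{rnRN}$ is positive and nonzero — which holds because $r,n,R,N$ are genuine divisors of $q-1 \ge 4$ and so are positive integers with positive Euler totients — so that dividing or multiplying by it preserves the strict inequality. With that in hand, the proof is complete.
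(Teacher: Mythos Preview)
Your argument is correct and matches the paper's approach exactly: the paper states the corollary as an immediate consequence of Theorem~\ref{thm:main_cond} without further proof, and your deduction from the formula $N_{f,F}=\frac{\phi(r)\phi(R)}{rnRN}(q+H)$ together with the bound on $|H|$ is precisely the intended one-line reasoning.
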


\begin{remark}
Following the proof of Theorem~\ref{thm:main_cond}, one may check that the condition we impose on $f, F$ can be replaced by a less restrictive one. In fact it suffices to assume that, for every $t\mid rn$ and $T\mid RN$ with $(t, T)\ne (1, 1)$, and integers $1\le c\le t$ and $1\le C\le T$ with $\gcd(t, c)=\gcd(T, C)=1$, the polynomial $f^{cT}F^{Ct}$ is not the form $\kappa \cdot g^{tT}$ with $g\in \F_q[x]$ and $\kappa\in \F_q$. We observe that if both $f$ and $F$ have one simple linear factor (distinct from each other), then the latter always holds and such polynomials are not necessarily square-free.
\end{remark}





\subsection{The prime sieve}

The aim of the section is to relax  further the condition of Theorem~\ref{thm:main_cond}. For this reason, we will employ the Cohen-Huczynska sieving technique, \cite{cohenhuczynska03}.

We describe the prime sieve in the context of the  the general cyclic group $\C_Q$ as introduced in Definition \ref{deffree}.

\begin{proposition}[Sieving inequality] \label{prop:siev_ineq}
 Let $n,N$ be fixed  divisors of $Q$ and $r,R$ divisors of $Q/n,Q/N$, respectively.  
  Set
\[ N(r,R) :=\#\{ (x,y)\in\C_Q^2 \ : \ x\text{ is $(r,n)$-free and $y$ is $(R,N)$-free} \} . \]
For  choices  $p_1, \ldots, p_u$  of distinct prime divisors of $r$ and $l_1, \ldots l_v$ of distinct prime divisors of $R$,   write $r^*=k_rp_1\cdots p_u$ and  $R^*=k_Rl_1\cdots l_v$,
where $k_r$ and $k_R$ are also square-free.
 Then
 \begin{equation}\label{sieveeq}
N(r,R) \geq \sum_{i=1}^u N(k_rp_i,k_R)+\sum_{i=1}^v N(k_r,k_Rl_i)   - (u+v-1)N(k_r,k_R). 
\end{equation}
Further, set $\delta = 1-\sum_{i=1}^u 1/p_i -\sum_{i=1}^v1/l_i$.  Then $(\ref{sieveeq})$ can be expressed in the form
\begin{equation}\label{eq:sieveeq1}
\begin{split} 
N(r,R) \geq\delta N(k_r,k_R)+\sum_{i=1}^u\left(N(k_r p_i,k_R) -\left(1- \frac{1}{p_i}\right)N(k_r,k_R)\right) \\ +\sum_{i=1}^v\left(N(k_r,k_Rl_i) -\left(1- \frac{1}{l_i}\right)N(k_r,k_R)\right).
\end{split}
\end{equation}

\end{proposition}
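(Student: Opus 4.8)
The plan is to prove the sieving inequality \eqref{sieveeq} first and then derive the reformulation \eqref{eq:sieveeq1} by a purely algebraic rearrangement. For \eqref{sieveeq}, the key observation is that $(r,n)$-freeness depends only on the radical of $r$ (Lemma~\ref{lem:rn}), so we may assume $r=r^*$ and $R=R^*$; thus $r=k_rp_1\cdots p_u$ and $R=k_Rl_1\cdots l_v$ with all factors square-free and the $p_i,l_j$ prime. For a square-free $m\mid Q/n$, let $A_m$ denote the set of $x\in\C_Q$ that are $(k_r m, n)$-free; by Lemma~\ref{lem:rnfree} an element $x$ is $(k_r,n)$-free but \emph{fails} to be $(k_rp_i,n)$-free precisely when $x$ is $(k_r,n)$-free and $x$ is of the form $g^{np_i}$. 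So the set of $(r,n)$-free elements is obtained from the set of $(k_r,n)$-free elements by removing, for each $i$, those that are $p_i$-divisible in the relevant sense; and an element is $(r,n)$-free iff it is $(k_r,n)$-free and lies in none of these $u$ ``bad'' subsets. The analogous statement holds on the $R$-side. Hence, writing $B$ for the set of pairs $(x,y)$ with $x$ $(k_r,n)$-free and $y$ $(k_R,N)$-free, and $E_i\subseteq B$ (resp. $E'_j\subseteq B$) for the pairs where additionally $x$ fails the $p_i$-condition (resp. $y$ fails the $l_j$-condition), the pairs counted by $N(r,R)$ are exactly $B\setminus\bigl(\bigcup_i E_i\cup\bigcup_j E'_j\bigr)$.

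At this point \eqref{sieveeq} is just the elementary Bonferroni-type bound $|B\setminus\bigcup_k F_k|\ge |B|-\sum_k|B\setminus F_k|+(m-1)|B|$ rearranged: from $|B\setminus\bigcup F_k|\ge |B|-\sum|F_k|$ and $|F_k|=|B|-|B\setminus F_k|$ with $m=u+v$ sets, one gets $N(r,R)\ge |B|-\sum_k(|B|-|B\setminus F_k|)=\sum_k|B\setminus F_k|-(u+v-1)|B|$. It remains to identify $|B\setminus E_i|=N(k_rp_i,k_R)$ — the pairs where $x$ is $(k_r,n)$-free and does not fail the $p_i$-condition, i.e. $x$ is $(k_rp_i,n)$-free — and likewise $|B\setminus E'_j|=N(k_r,k_Rl_j)$, and $|B|=N(k_r,k_R)$. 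Substituting gives exactly \eqref{sieveeq}.

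For \eqref{eq:sieveeq1}, I would simply regroup the right-hand side of \eqref{sieveeq}. Write $\sum_{i=1}^u N(k_rp_i,k_R)=\sum_{i=1}^u\bigl(N(k_rp_i,k_R)-(1-1/p_i)N(k_r,k_R)\bigr)+\bigl(\sum_{i=1}^u(1-1/p_i)\bigr)N(k_r,k_R)$ and similarly for the $R$-sum, then collect the $N(k_r,k_R)$ coefficients: $\sum_i(1-1/p_i)+\sum_j(1-1/l_j)-(u+v-1)=1-\sum_i 1/p_i-\sum_j 1/l_j=\delta$. This yields \eqref{eq:sieveeq1} term by term, with no estimation required.

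I do not expect a genuine obstacle here; the only point needing care is the bookkeeping translating Lemma~\ref{lem:rnfree} into the statement ``$x$ is $(k_r,n)$-free and is not $np_i$-divisible $\iff$ $x$ is $(k_rp_i,n)$-free'' — one must check that adjoining the single prime $p_i$ (which does not divide $k_r$) to the freeness parameter has exactly this effect, using that $k_r p_i$ is still square-free and still divides $Q/n$. Once that identification is in hand, both displays follow from inclusion–exclusion and algebra.
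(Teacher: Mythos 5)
Your proof is correct and is essentially the same argument as the paper's: the paper verifies \eqref{sieveeq} by a pointwise ``scoring'' count (each pair counted by $N(r,R)$ contributes exactly $u+v-(u+v-1)=1$ to the right-hand side, every other pair contributes at most $0$), which is precisely the Bonferroni/union bound you write down, and your identification of $|B\setminus E_i|$ with $N(k_rp_i,k_R)$ via Lemma~\ref{lem:rnfree} is the same bookkeeping. The passage to \eqref{eq:sieveeq1} is the identical algebraic regrouping in both cases.
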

\begin{proof} Each of the $N$-terms on the right side of (\ref{sieveeq}) can  only score (count one)  for pairs $(x,y) \in \C_Q^2$ for which $x$ is $(k_r,n)$-free and $y$ is $(k_R,N)$-free.    In addition,  to be scored, for instance, by $N(k_rp_i,k_R)$, $(x,y)$ has to be $(p_i,n)$-free.     We conclude that the aggregate score counted on the right side by members of the set in the definition of $N(r,R)$ is $u+v-(u+v-1)=1$.    On the other hand, if,  for instance $(x,y)$ is $(k_r,n)$-free  but not $(p_i,n)$-free and $y$ is $(k_R,N)$-free, it will not score in $N(k_rp_i)$ and so its aggregate score will be non-positive.
\end{proof}


We apply the sieving inequality with $\C_Q=\F_q^*$ to produce a sieving version of Theorem \ref{thm:main_cond}.  From Lemma \ref{lem:rn}  we could  assume $r,R$ are square-free (though some of their prime factors may also be divisors of $n, N$, respectively).
\begin{theorem} \label{thm:sieve_cond}
Assume the notation and conditions of Theorem $ \ref{thm:main_cond}$. Further, let $p_1, \ldots, p_u$  be distinct primes dividing $r$ and  $l_1, \ldots, l_v$ be distinct primes dividing
 $R$.  Write $r^*=k_rP_r$, where, for each $i=1, \ldots, u$, $p_i|P_r$ but $p_i\nmid k_r$ and similarly $R^*=k_RP_R$.  Set $ \delta =1 -\sum_{i=1}^u 1/p_i- \sum_{i=1}^v 1/l_i$ and suppose that $\delta >0$. 
Then
\begin{equation}\label{eq:Nbound}
N_{f, F}\geq \delta\cdot \frac{\phi(k_r) \phi(k_R)}{k_rnk_RN}\left (q- DnNW(k_r)W(k_R)\left(\frac{u+v-1}{\delta}+2\right)q^{1/2}\right).
\end{equation}
\end{theorem}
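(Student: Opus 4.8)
The plan is to combine the sieving inequality \eqref{eq:sieveeq1} of Proposition~\ref{prop:siev_ineq} with the main estimate of Theorem~\ref{thm:main_cond}, the one point requiring care being that the error terms thrown off by Theorem~\ref{thm:main_cond} must be compared in \emph{differences} rather than bounded one at a time. First, by Lemma~\ref{lem:rn} an element is $(r,n)$-free precisely when it is $(r^*,n)$-free, so $N_{f,F} = N_{f,F}(r^*, n, R^*, N)$; for a square-free divisor $d_1$ of $r^*$ and $d_2$ of $R^*$ let $N_{f,F}(d_1, d_2)$ denote the count with parameters $(d_1, n, d_2, N)$. The scoring estimate underlying Proposition~\ref{prop:siev_ineq} is valid pointwise in $(x,y)$, so specialising it to $x = f(\theta)$, $y = F(\theta)$ and summing over $\theta \in \F_q$ shows that \eqref{eq:sieveeq1} holds with every $N(\cdot,\cdot)$ replaced by $N_{f,F}(\cdot,\cdot)$. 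Since $k_r p_i \mid r^* \mid \frac{q-1}{n}$ and $k_R l_i \mid R^* \mid \frac{q-1}{N}$, Theorem~\ref{thm:main_cond} applies to each term appearing there, and I write $N_{f,F}(d_1, d_2) = \frac{\phi(d_1)\phi(d_2)}{d_1 n d_2 N}(q + H(d_1, d_2))$ with $|H(d_1, d_2)| \le D n N W(d_1)W(d_2)q^{1/2}$.

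Next I would factor out the common quantity $\Theta := \frac{\phi(k_r)\phi(k_R)}{k_r n k_R N}$. As $k_r$ is square-free and $p_i \nmid k_r$, we have $\frac{\phi(k_r p_i)}{k_r p_i} = (1-\frac1{p_i})\frac{\phi(k_r)}{k_r}$, and likewise for $k_R l_i$, so each bracket in \eqref{eq:sieveeq1} telescopes:
\[ N_{f,F}(k_r p_i, k_R) - \Bigl(1 - \tfrac1{p_i}\Bigr)N_{f,F}(k_r, k_R) = \Bigl(1 - \tfrac1{p_i}\Bigr)\Theta\bigl(H(k_r p_i, k_R) - H(k_r, k_R)\bigr), \]
and similarly with $k_R l_i$. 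Substituting these, together with $\delta\, N_{f,F}(k_r, k_R) = \delta\Theta(q + H(k_r, k_R))$, into \eqref{eq:sieveeq1} gives
\[ N_{f,F} \ge \Theta\Bigl[\delta q + \delta H(k_r, k_R) + \sum_{i=1}^u\Bigl(1 - \tfrac1{p_i}\Bigr)\bigl(H(k_r p_i, k_R) - H(k_r, k_R)\bigr) + \sum_{i=1}^v\Bigl(1 - \tfrac1{l_i}\Bigr)\bigl(H(k_r, k_R l_i) - H(k_r, k_R)\bigr)\Bigr]. \]

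The decisive step, which is also the main obstacle, is to show $|H(k_r p_i, k_R) - H(k_r, k_R)| \le B$ with $B := D n N W(k_r)W(k_R)q^{1/2}$ — the naive triangle inequality only yields $3B$, which is not enough for the claimed bound. To get this I would return to the proof of Theorem~\ref{thm:main_cond}, where $H(d_1, d_2) = -\epsilon + \sum_{(t,T)\ne(1,1)}\frac{\mu(t_{(n)})\mu(T_{(N)})}{\phi(t_{(n)})\phi(T_{(N)})}\sum_{\ord(\eta)=t,\,\ord(\chi)=T}G_{f,F}(\eta,\chi)$, the quantity $\epsilon$ (the number of $\F_q$-roots of $f\cdot F$) being independent of $d_1, d_2$. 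Because $k_r n \mid k_r p_i n$, replacing $k_r$ by $k_r p_i$ only appends terms indexed by pairs $(t,T)$ with $t \mid k_r p_i n$, $t \nmid k_r n$ (hence $t \ne 1$) and $T \mid k_R N$; Theorem~\ref{Gauss1} bounds each by $\frac{|\mu(t_{(n)})\mu(T_{(N)})|}{\phi(t_{(n)})\phi(T_{(N)})}Dq^{1/2}$, and summing, using Lemma~\ref{lem:arithfun} in the form $T(ab,b) = bW(a)$, the total weighted multiplicity of the new pairs is $\bigl(T(k_r p_i n, n) - T(k_r n, n)\bigr)T(k_R N, N) = \bigl(nW(k_r p_i) - nW(k_r)\bigr)NW(k_R) = nNW(k_r)W(k_R)$. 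Hence $|H(k_r p_i, k_R) - H(k_r, k_R)| \le B$; symmetrically $|H(k_r, k_R l_i) - H(k_r, k_R)| \le B$, and $|H(k_r, k_R)| \le B$ straight from Theorem~\ref{thm:main_cond}.

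Finally, inserting these three bounds into the last displayed inequality and using the identity $\sum_{i=1}^u(1 - \frac1{p_i}) + \sum_{i=1}^v(1 - \frac1{l_i}) = (u+v) - (1 - \delta) = u + v - 1 + \delta$, I would conclude
\[ N_{f,F} \ge \Theta\bigl(\delta q - \delta B - (u + v - 1 + \delta)B\bigr) = \delta\,\Theta\Bigl(q - \Bigl(\tfrac{u+v-1}{\delta} + 2\Bigr)B\Bigr), \]
which is exactly \eqref{eq:Nbound} once $\Theta$ and $B$ are written out. Apart from the telescoping estimate of the previous paragraph — where one must check that enlarging the modulus of the inner sum in Theorem~\ref{thm:main_cond}'s error term contributes only genuinely non-trivial character pairs, whose weighted multiplicity is controlled by Lemma~\ref{lem:arithfun} — the remaining steps are routine bookkeeping with $\phi$ and $W$.
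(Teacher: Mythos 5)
Your proposal is correct and follows essentially the same route as the paper: both combine the sieving inequality \eqref{eq:sieveeq1} with Theorem~\ref{thm:main_cond}, and both hinge on the same key observation that each difference $N_{f,F}(k_rp_i,k_R)-\bigl(1-\tfrac{1}{p_i}\bigr)N_{f,F}(k_r,k_R)$ reduces exactly to the character-sum terms indexed by the ``new'' (hence nontrivial) characters, whose weighted multiplicity is $nNW(k_r)W(k_R)$ by Lemma~\ref{lem:arithfun}, rather than using the lossy triangle inequality. Your explicit remarks on transferring the pointwise scoring argument to the counts $N_{f,F}$ and on the cancellation of $\epsilon$ are details the paper leaves implicit, but the argument is the same.
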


\begin{proof}
Assume that $r,R$ are square-free.   Given $n,F,n,N$, let $N(r,R)$ stand for  $N_{f,F}(r,n,R,N)$. Further, set $\theta= \frac{\phi(k_r)\phi(k_R)}{k_rnk_RN}$.
From Theorem \ref{thm:main_cond},
  \begin{equation} \label{eq:mainterm}
  N(k_r,k_R) \geq  \theta(q- DnNW(k_r)W(k_R)q^{1/2}). 
  \end{equation}
  
  Next, we bound the differences shown in (\ref{eq:sieveeq1}).  Towards this, for $1 \leq i \leq u$, set $\Delta_{p_i}= N (k_rp_i,k_R)-\left(1- \frac{1}{p_i}\right)N(k_r,k_R)$.  Then, with $G$ standing for $G_{f,F}$ in the proof of  Theorem \ref{thm:main_cond}, we have that
  
  \[\Delta_{p_i} =\theta\left(1-\frac{1}{p_i}\right) \left(\sum_{\substack{ t|k_rn\\ T|k_RN}}\frac{\mu((tp_i)_{(n)})\mu(T_{(N)})}{\phi((tp_i)_{(n)})\phi(T_{(N)})}\sum_{\substack{\ord(\eta)=tp_i\\ \ord(\chi)=T}}G(\eta,\chi)\right ) . \]
  Since each character $\eta$ is nontrivial, we have the bound
  \begin{eqnarray}\label{eq:diff1}
  |\Delta_{p_i}| & \leq \theta\left(1-\frac{1}{p_i}\right) DnN(W(k_rp_i)-W(k_r))W(k_R)q^{1/2}\nonumber\\
  & = \theta\left(1-\frac{1}{p_i}\right) D nNW(k_r)W(k_R)q^{1/2}.
\end{eqnarray}

Similarly, if  $\Delta_{l_i}= N (k_r,k_Rl_i)-\left(1- \frac{1}{l_i}\right)N(k_r,k_R)$  for each $1\leq i \leq v$,  we have
\begin{equation} \label{eq:diff2}
|\Delta_{l_i}| \leq \theta\left(1-\frac{1}{l_i}\right)  D nNW(k_r)W(k_R)q^{1/2}.
\end{equation}
Inserting the bounds (\ref{eq:mainterm}), (\ref{eq:diff1}) and (\ref{eq:diff2}) in  (\ref{eq:sieveeq1}), we obtain (\ref{eq:Nbound}).
\end{proof}

%
%
The next theorem is an immediate consequence of Theorem \ref{thm:sieve_cond}  when  $r=(q-1)/n$ and $R=(q-1)/N$.
\begin{theorem}\label{thm:main_siev}
Let $f,F,n,N$ be as in Theorem $\ref{thm:main_cond}$. Write $((q-1)/n)^* = k_n p_1 \cdots p_u$, where  $p_1,\ldots ,p_u$ are distinct primes and similarly $((q-1)/N)^*=k_N l_1 \cdots l_v$. Set $\delta = 1-\sum_{i=1}^u 1 /p_i- \sum_{i=1}^v 1/l_i$ and assume $\delta>0$.
Then, using the same notation as in Theorem~$\ref{thm:main_cond}$, there exists some $(x,X)\in\F_q^2$, such that $f(x)$ is $n$-primitive and $F(X)$ is $N$-primitive, provided that
\[ q^{1/2}  > DnNW(k_n)W(k_N)\left( \frac{u+v-1}{\delta} + 2 \right) . \]

\end{theorem}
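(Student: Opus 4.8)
The plan is to derive Theorem~\ref{thm:main_siev} as the special case of Theorem~\ref{thm:sieve_cond} obtained by taking $r=(q-1)/n$, $R=(q-1)/N$ and sieving on \emph{all} available primes. First I would check the hypotheses: $r=(q-1)/n$ is certainly a divisor of $(q-1)/n$ (and similarly for $R$), so Theorem~\ref{thm:sieve_cond} applies. With this choice $r^*=((q-1)/n)^*$; using the factorization $((q-1)/n)^*=k_np_1\cdots p_u$ from the statement, I would let $p_1,\dots,p_u$ be the sieving primes, so that in the notation of Theorem~\ref{thm:sieve_cond} one has $k_r=k_n$ and $P_r=p_1\cdots p_u$, and symmetrically $k_R=k_N$, $P_R=l_1\cdots l_v$. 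The resulting $\delta=1-\sum_i 1/p_i-\sum_i 1/l_i$ is exactly the $\delta$ of the statement and is positive by hypothesis.

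Second, I would translate freeness into primitivity. Taking $\C_Q=\F_q^*$ with $Q=q-1$, Remark~\ref{rem:main}(ii) tells us that the $(Q/n,n)$-free elements of $\C_Q$ are precisely the elements of order $(q-1)/n$, i.e.\ the $n$-primitive elements of $\F_q$; likewise the $((q-1)/N,N)$-free elements are exactly the $N$-primitive ones. Hence $N_{f,F}\big((q-1)/n,n,(q-1)/N,N\big)$ is the number of $\theta\in\F_q$ with $f(\theta)$ $n$-primitive and $F(\theta)$ $N$-primitive; if this quantity is positive we obtain the required $(x,X)$ by taking $x=X=\theta$.

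Third, I would read off positivity from the bound~\eqref{eq:Nbound}. With $k_r=k_n$ and $k_R=k_N$ it becomes
\[ N_{f,F}\ \geq\ \delta\cdot\frac{\phi(k_n)\phi(k_N)}{k_n\,n\,k_N\,N}\left(q-DnN\,W(k_n)W(k_N)\left(\frac{u+v-1}{\delta}+2\right)q^{1/2}\right), \]
and since $\delta>0$ and the prefactor $\phi(k_n)\phi(k_N)/(k_n\,n\,k_N\,N)$ is positive, the right-hand side is positive exactly when the bracket is, i.e.\ (dividing by $q^{1/2}>0$) exactly when $q^{1/2}>DnN\,W(k_n)W(k_N)\big((u+v-1)/\delta+2\big)$, which is the hypothesis. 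This gives $N_{f,F}>0$ and hence the claimed pair.

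I do not foresee a genuine difficulty: the result is essentially a corollary of Theorem~\ref{thm:sieve_cond}. The only steps requiring attention are the bookkeeping that aligns the factorization $((q-1)/n)^*=k_np_1\cdots p_u$ with the $(k_r,P_r)$ notation of Theorem~\ref{thm:sieve_cond} (in particular that it is legitimate to sieve simultaneously on every prime factor of $P_r$), and the use of Remark~\ref{rem:main}(ii) to identify $((q-1)/n,n)$-freeness in $\F_q^*$ with $n$-primitivity, which is precisely what converts the abstract count $N_{f,F}$ into the desired statement about $n$- and $N$-primitive polynomial values.
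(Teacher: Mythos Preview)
Your proposal is correct and follows exactly the paper's approach: the authors state that the theorem is an immediate consequence of Theorem~\ref{thm:sieve_cond} with $r=(q-1)/n$ and $R=(q-1)/N$, and your write-up simply makes the identifications $k_r=k_n$, $k_R=k_N$ and the use of Remark~\ref{rem:main}(ii) explicit. There is nothing to add.
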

We will refer to the primes $p_1,\ldots ,p_u, l_1, \ldots, l_v$ appearing above as the \emph{sieving primes}.

\section{Special points on elliptic curves}

In this section, we apply our methods to study special points on elliptic curves. More specifically, given an elliptic curve  $\mathcal C:y^2=f(x)$ defined over $\F_q$, with $f\in\F_q[x]$ being a square-free cubic, we study the existence of $\F_q$-primitive points on $\mathcal C$.

Equivalently, we request a primitive $x$, such that $f(x)$ is $2$-primitive, i.e., our goal is to prove that
\[ N_f := N_{x,f(x)}(q-1,1,(q-1)/2,2)  \] 
is positive.
Notice that $x, f(x)$ are square-free polynomials and the ratio $x/f(x)$ is not a constant. Thus Corollary~\ref{cor:main_cond} yields that a sufficient condition for $N_f>0$ is
\begin{equation} \label{eq:main_cond1}
q^{1/2} \geq 3\cdot 1\cdot 2\cdot W(q-1)W((q-1)/2) = 6 W(q-1)W\left( \frac{q-1}{2} \right) .
\end{equation}
%
%

Our next aim is to explore the numerical aspects of \eqref{eq:main_cond1}. Naturally, an estimate of the number $W(a)$ is necessary.
\begin{lemma} \label{lem:w}
Let $t$, $a$ be positive integers and let $p_1 , \ldots, p_j$ be the distinct prime divisors of
$t$ such that $p_i \leq 2^a$. Then $W (t) \leq c_{t,a} t^{1/a}$, where
\[  c_{t,a} = \frac{2^j}{( p_1 \cdots p_j )^{1/a}} . \]
In particular, for 
$d_t := c_{t,6}$ 
we have the bound 
$d_t<37.47$.
\end{lemma}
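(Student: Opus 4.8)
The plan is to prove the multiplicative-style estimate $W(t)\le c_{t,a}t^{1/a}$ by reducing it to a prime-by-prime comparison, and then to bound the constant $c_{t,6}$ uniformly by a finite (and short) explicit computation over the small primes.

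First I would establish the inequality $W(t)\le c_{t,a}t^{1/a}$. Since $W$ is multiplicative and $W(p^e)=2$ for every prime power $p^e$ with $e\ge 1$, it suffices to check, for each prime power $p^e\|t$, that $2\le g(p^e)\cdot p^{e/a}$ where $g(p^e)=1/p^{1/a}$ if $p\le 2^a$ and $g(p^e)=1$ if $p>2^a$; taking the product over all $p^e\|t$ then yields exactly $W(t)\le c_{t,a}t^{1/a}$. For primes $p>2^a$ this is the assertion $p^{e/a}\ge 2$, i.e. $p^e\ge 2^a$, which holds since already $p>2^a$. For primes $p\le 2^a$ it is the assertion $2\,p^{1/a}\le 2\,p^{e/a}$, i.e. $p^{(e-1)/a}\ge 1$, which is trivial as $e\ge 1$. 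Hence $\prod_{p^e\|t} 2 \le \prod_{p^e\|t} g(p^e)p^{e/a} = c_{t,a}t^{1/a}$, which is the claim. (One could equivalently phrase it as: writing $t=\prod p_i^{e_i}$, $W(t)=2^{\omega(t)}$ and $t^{1/a}=\prod p_i^{e_i/a}$, so $W(t)/t^{1/a}=\prod 2/p_i^{e_i/a}\le \prod_{p_i\le 2^a}2/p_i^{1/a}=c_{t,a}$ because each factor with $p_i>2^a$ is $\le 1$ and each factor with $p_i\le 2^a$ is at most its value at $e_i=1$.)

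Next I would bound $d_t=c_{t,6}$. Here $2^a=64$, and the primes $p\le 64$ are $2,3,5,7,11,13,17,19,23,29,31,37,41,43,47,53,59,61$. For a given $t$, $d_t$ is the product over those of these primes that divide $t$ of the factor $2/p^{1/6}$. A factor $2/p^{1/6}$ exceeds $1$ precisely when $p<2^6=64$, which is the case for every prime in our list, so the largest possible value of $d_t$ is obtained by taking \emph{all} of these primes as divisors: $d_t\le \prod_{p\le 61} 2/p^{1/6} = 2^{18}/(2\cdot 3\cdot 5\cdots 61)^{1/6}$. Evaluating this finite product numerically gives a value below $37.47$, which is the asserted bound; the main (indeed only) obstacle is the arithmetic of checking that $2^{18}$ divided by the sixth root of the primorial $2\cdot3\cdot5\cdot7\cdot11\cdot13\cdot17\cdot19\cdot23\cdot29\cdot31\cdot37\cdot41\cdot43\cdot47\cdot53\cdot59\cdot61$ is indeed less than $37.47$, which is a short direct computation (one checks $(2\cdot3\cdots61) > (2^{18}/37.47)^6$). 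This completes the proof.
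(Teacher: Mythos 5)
Your proof is correct and follows essentially the same route the paper intends: the inequality $W(t)\le c_{t,a}t^{1/a}$ via multiplicativity (comparing $2$ against $p^{e/a}$ prime by prime, exactly as in Lemma~3.3 of Cohen--Huczynska), and the bound $d_t<37.47$ by maximizing $\prod 2/p^{1/6}$ over the $18$ primes below $2^6=64$, which gives $2^{18}/(61\#)^{1/6}\approx 37.468<37.47$. The paper's proof is just a citation plus the remark that the constant ``can be easily computed''; you have simply written out those details.
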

\begin{proof}
The statement is an immediate generalization of Lemma~3.3 of \cite{cohenhuczynska03} and can be proved
using multiplicativity. The bound for 
$d_t$ 
can be easily computed.
\end{proof}
Here  we have singled out the value $a=6$ for convenience in what follows. Now, we move on to numerical computations. We note that for this purpose we relied on the \textsc{SageMath} mathematics software system.

Notice if $(q-1)/2$ is even, then $W((q-1)/2) = W(q-1)$, whilst if $(q-1)/2$ is odd, then $W((q-1)/2) = W(q-1)/2$. It follows that a combination of \eqref{eq:main_cond1} and Lemma~\ref{lem:w} yields two conditions, depending on the parity of $(q-1)/2$. Namely, 
\[ q^{1/6} \geq \frac{6\cdot 37.47^2}{2^{1/3}} \quad \text{and} \quad q^{1/6} \geq 3\cdot 37.47^2  , \]
if $q\equiv 1\pmod{4}$ and $q\equiv 3\pmod{4}$ respectively. We check the stronger of the two conditions, i.e., the former, and verify that it is satisfied for $q\geq 8.94\cdot 10^{22}$.

Then, we observe that if $q-1$ is divided by $18$ or more prime numbers, then $q> 8.94\cdot 10^{22}$, which implies that the case $W(q-1)\geq 2^{18}$ is settled.

Our next step will settle the cases $2^{13}\leq W(q-1)\leq 2^{17}$. Let $t_{\min}\leq t_{\max}$ be two positive integers. Further, let $p_i$ stand for the $i$-th prime number, that is, for example, $p_1=2$ and $p_3=5$. Now, assume that for some odd prime power $q$, we have that $2^{t_{\min}}\leq W(q-1)\leq 2^{t_{\max}}$. It follows that $q-1$ is divided by at least $t_{\min}$ and by at most $t_{\max}$ prime numbers, which implies that $q>p_1\cdots p_{t_{\min}}$. Moreover, we may choose as sieving primes the largest $n< t_{\min}$ prime divisors of $q-1$ and, more precisely, take each of them twice, in such a way that $1-\sum_{i=1}^n \frac{2}{p_i} > 0$. This way we ensure that the number $\delta$ (see Theorem~\ref{thm:sieve_cond}) satisfies $\delta > 1-\sum_{i=1}^n \frac{2}{p_i} > 0$. It then follows from Theorem~\ref{thm:sieve_cond} that $N_f>0$ if
\[ \sqrt{p_1\cdots p_{t_{\min}}} > 6\cdot 4^{t_{\max} - n} \left( \frac{2n-1}{1-\sum_{i=1}^n \frac{2}{p_i}} + 2 \right) . \]
We computationally verify that the above holds for the pairs $(t_{\max},t_{\min}) = (17,15)$ and $(14,13)$, that is, the case $W(q-1)\geq 2^{13}$ is settled, thus, the case $q>6\cdot 4^{12} = 100663296$ is settled.

We proceed to reducing the number of possible exceptions as much as possible. First, we try the condition of Corollary~\ref{cor:main_cond} within the range $3<q\leq 100663296$ with each quantity explicitly computed, instead of using generic estimates. It turns out that within this range there are 5798811 odd prime powers, with 797566 of them failing to satisfy this condition and $q=100663291$ being the largest among them.

Finally, we attempt to use the prime sieve, see Theorem~\ref{thm:sieve_cond}, on these 
 persistent prime powers, again with all the quantities explicitly computed. Our computations reveal that there exists a suitable set of sieving primes for almost 97\% of these numbers. More precisely, this method was unsuccessful for 24826 out of the 797566 prime numbers checked, with $q=82192111$ being the largest among them. To sum up, we have proved the following.
\begin{theorem} \label{thm:elliptic1}
  Let $q>82192111$ be an odd prime power. Further, let $f(x)\in\F_q[x]$ be a square-free polynomial of degree $3$, then the elliptic curve $\mathcal C : y^2=f(x)$ contains $\F_q$-primitive points.
\end{theorem}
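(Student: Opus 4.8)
The plan is to combine the sufficient condition from Corollary~\ref{cor:main_cond} (respectively its sieved version, Theorem~\ref{thm:sieve_cond}) with the estimate on $W(\cdot)$ from Lemma~\ref{lem:w}, reducing the claim to a finite computation. The point count we must show positive is $N_f=N_{x,f(x)}(q-1,1,(q-1)/2,2)$; since $x$ and $f(x)$ are square-free with non-constant ratio, and $D+1\le 4$ (as $x\cdot f(x)$ is a quartic with at most $4$ distinct roots, so $D\le 3$), Corollary~\ref{cor:main_cond} gives $N_f>0$ as soon as $q^{1/2}>6\,W(q-1)\,W((q-1)/2)$, which is \eqref{eq:main_cond1}. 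The strategy splits the set of odd prime powers $q\ge 5$ according to the size of $W(q-1)$.

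First I would handle the genuinely asymptotic range. Using $d_t<37.47$ from Lemma~\ref{lem:w} (the $a=6$ case) together with the observation that $W((q-1)/2)$ is either $W(q-1)$ or $W(q-1)/2$ according to $q\bmod 4$, inequality \eqref{eq:main_cond1} becomes a pure lower bound on $q^{1/6}$; the worst case $q\equiv 1\pmod 4$ requires $q^{1/6}\ge 6\cdot 37.47^2/2^{1/3}$, i.e.\ $q\ge 8.94\cdot 10^{22}$. Since $q-1$ divisible by $18$ distinct primes already forces $q>8.94\cdot 10^{22}$, this settles all $q$ with $W(q-1)\ge 2^{18}$ outright. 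The intermediate band $2^{13}\le W(q-1)\le 2^{17}$ is then treated with the prime sieve: if $2^{t_{\min}}\le W(q-1)\le 2^{t_{\max}}$ then $q>p_1\cdots p_{t_{\min}}$, and taking as sieving primes the largest $n<t_{\min}$ prime divisors of $q-1$, each counted twice, one gets $\delta>1-\sum_{i=1}^n 2/p_i>0$; Theorem~\ref{thm:sieve_cond} with $D=3$, $n=N$-contributions absorbed, and $W(k_n)W(k_N)\le 4^{t_{\max}-n}$ reduces positivity of $N_f$ to
\[ \sqrt{p_1\cdots p_{t_{\min}}} > 6\cdot 4^{t_{\max}-n}\left(\frac{2n-1}{1-\sum_{i=1}^n 2/p_i}+2\right), \]
which one checks numerically for $(t_{\max},t_{\min})=(17,15)$ and $(14,13)$. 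This disposes of every $q>6\cdot 4^{12}=100663296$.

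What remains is a finite but large range, $3<q\le 100663296$, and here the proof is computational: for each odd prime power $q$ in that interval one first tests the exact condition of Corollary~\ref{cor:main_cond} with $W(q-1)$, $W((q-1)/2)$ computed directly rather than via the $37.47$ bound; the $\approx 7.97\times 10^5$ survivors are then re-examined with the exact prime sieve of Theorem~\ref{thm:sieve_cond}, optimizing the choice of sieving primes. After this, $24826$ prime powers remain, the largest being $q=82192111$, and for every $q>82192111$ one of the above tests succeeds, giving $N_f>0$ and hence an $\F_q$-primitive point on $\mathcal C:y^2=f(x)$ for arbitrary square-free cubic $f$. The main obstacle is not any single estimate but the organization and verification of the computation: one must be confident that the SageMath search over all $\approx 5.8\times 10^6$ odd prime powers below $10^8$, with the sieve-prime optimization at each stage, is correct and exhaustive, since the theorem's exact cutoff $82192111$ is read off directly from that search rather than from a closed-form bound.
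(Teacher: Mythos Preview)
Your proposal is correct and follows the paper's own argument essentially step for step: the same reduction to $N_f=N_{x,f(x)}(q-1,1,(q-1)/2,2)$ with $D=3$, the same asymptotic cutoff via Lemma~\ref{lem:w} with $a=6$, the same sieve treatment of the band $2^{13}\le W(q-1)\le 2^{17}$ using the pairs $(17,15)$ and $(14,13)$, and the same two-stage computational sweep below $100663296$ that terminates with the $24826$ survivors capped at $q=82192111$. There is no substantive deviation from the paper's proof.
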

We believe that identifying the genuine exceptions to the above theorem is an interesting research question.
\begin{problem}
Identify all the odd prime powers $3\leq q\leq 82192111$ and the corresponding square-free cubic polynomials $f\in\F_q[x]$ such that the elliptic curve $\mathcal C : y^2=f(x)$ does not contain $\F_q$-primitive points.
\end{problem}

\subsection{The elliptic curve $\mathcal C : y^2=x^3-ax$}

Finally, we study the special case of the elliptic curve $\mathcal C : y^2 = f_a(x)$, where $f_a(x)= x^3-ax$, $a\in\F_q^*$. Notice that since $f_a(0)=0$, the polynomial $x\cdot f_a(x)$ has 3 distinct roots, so, in this special case, the condition of \eqref{eq:main_cond1} may be replaced by the significantly weaker condition
\[q^{1/2} \geq 2\cdot 1\cdot 2\cdot W(q-1)W((q-1)/2) = 4 W(q-1)W\left( \frac{q-1}{2} \right) . \]

We repeat the same steps that lead us to Theorem~\ref{thm:elliptic1}. Having a weaker condition, we obtain that if $q>16763671$, then the elliptic curve $\mathcal C:y^2=f_a(x)$ over $\F_q$, always has some $\F_q$-primitive point, while in the range $3\leq q\leq 16763671$ there are exactly 11041 odd prime powers that may not possess this property. Additionally, observe that, so far, the same can be said for any square-free cubic $g\in\F_q[x]$ with $g(0)=0$.

We return to our special case and attempt an exhaustive search. In particular, we attempt to identify explicitly a point on the curve $\mathcal C:y^2=f_a(x)$, for all the 11041 persistent prime powers $q$ and for all $a\in\F_q^*$. Due to hardware restrictions and the vast number of elliptic curves that have to be checked, this computation was not completed. In particular, we checked the first 4624 prime powers, which leaves us with 6417 prime powers, all within the range $141121\leq q\leq 16763671$ unchecked. These partial results suggest that all of these elliptic curves have some $\F_q$-primitive point, with the only exceptions being $q=3$, $5$, $7$, $9$, $13$, $17$, $25$, $29$, $31$, $41$, $49$, $61$, $73$, $81$, $121$ and $337$, while the number of curves over the corresponding finite fields with no such points is given in Table~\ref{tab:1}.
\begin{table}[h]
\begin{center}
    \begin{tabular}{c|cccccccccccccccc}
      $q$ & $3$& $5$& $7$& $9$& $13$& $17$& $25$& $29$& $31$& $41$& $49$& $61$& $73$& $81$& $121$& $337$ \\ \hline
      \# curves & 1 & 2 & 3 & 5 & 5 & 6 & 12 & 1 & 1 & 8 & 8 & 10 & 12 & 10 & 16 & 2
    \end{tabular}
\end{center}
    \caption{Number of curves $\mathcal C:y^2=x^3-ax$, $a\in\F_q^*$, over $\F_q$, without $\F_q$-primitive points, when $q\not\in [141121, 16763671]$.\label{tab:1}}
\end{table}
These findings enable us to state Conjecture~\ref{conj:ell2}.
We believe that attacking Conjecture~\ref{conj:ell2} in its full generality would be nontrivial but not hopeless: it would require a combination of advanced theoretical techniques with exhaustive computational methods.

Finally, we repeat the same procedure for two special curves within the aforementioned family of curves, namely the curves $\mathcal C:y^2=x^3-x$ and $\mathcal C:y^2=x^3+x$. In particular, after spending just a few seconds of computer time, we explicitly check all the possibly exceptional curves and, as a result, we obtain Theorem~\ref{thm:ell1}.
%
%
%

\end{document}